\theoremstyle{plain} 
\newtheorem{theorem}{\noindent\bf Theorem}[section] 
\newtheorem{lemma}[theorem]{\noindent\bf Lemma}
\newtheorem{corollary}[theorem]{\noindent\bf Corollary}
\newtheorem{proposition}[theorem]{\noindent\bf Proposition}
\theoremstyle{definition} 
\newtheorem{remark}[theorem]{\noindent\it Remark}
\newcommand{\Ker}[0]{\operatorname{Ker}}
\newcommand{\deldel}{\sqrt{-1}\partial \overline{\partial}}
\newcommand{\del}{\sqrt{-1}\partial}
\newcommand{\dbar}{\overline{\partial}}
\newcommand{\e}{\varepsilon}
\newcommand{\lla}[0]{{\langle\!\hspace{0.02cm} \!\langle}}
\newcommand{\rra}[0]{{\rangle\!\hspace{0.02cm}\!\rangle}}
\newcommand{\ke}{\omega_{\rm KE}}
\newcommand{\calpha}{(\alpha_i)_{i=1}^N}
\newcommand{\comega}{(\omega_i)_{i=1}^N}
\newcommand{\ctheta}{(\theta_i)_{i=1}^N}
\begin{document}

\title[Deformation for coupled K\"ahler-Einstein metrics]{Deformation for coupled K\"ahler-Einstein metrics} 
\author[S. Nakamura]{Satoshi Nakamura} 

%\author[S. Author]{Kaho Harigaya} 
%\dedicatory{Dedicated to Professor Xxx Yyy on his sixtieth birthday}

\subjclass[2010]{ %2010 MSC numbers
Primary 53C25; Secondary 53C55, 58E11.
}
\keywords{ %key words and phrases
Coupled K\"ahler Einstein metrics, Deformation theory, Futaki type invariant.
}
\thanks{ %Thanks
}

\address{
Department of Mathematics\\
Saitama University\\ 
255 Shimo-Okubo, Sakura-Ku, Saitama 380-8570\\ 
Japan
}
\email{nakamura1990@mail.saitama-u.ac.jp}

%\address{%Mathematical Institute \endgraf
%Tohoku University \endgraf
%Sendai 980-8578 \endgraf
%Japan
%}
%\email{author2@math.tohoku.ac.jp}

%%%%%%%%%%%%%%%%%%%%%%%%%%%%%%%%%%%%%%%%%

\maketitle

\begin{abstract}
The notion of coupled K\"ahler-Einstein metrics was introduced recently by Hultgren-WittNystr\"om.
In this paper we discuss deformation of a coupled K\"ahler-Einstein metrics on a Fano manifold.
We obtain a necessary and sufficient condition for a coupled K\"ahler-Einstein metric to be deformed to another coupled K\"ahler-Einstein metric for a Fano manifold admitting non-trivial holomorphic vector fields.
%This generalizes Hultgren-WittNystr\"om's result.
In addition we also discuss deformation for a coupled K\"aher-Einstein metric on a Fano manifold when the complex structure varies. 
\end{abstract}

%\newpage
%\tableofcontents

%%%%%%%%%%%%%%%%%%%%%%%%%%%%%%%%%%%%%
\section{Introduction}

The existence problem for canonical K\"ahler metrics for polaraized manifolds is one of the central topics in K\"ahler geometry.
In particular, K\"ahler-Einstein metrics for Fano manifolds has been discussed by many experts from 1980's.
In 2015, Chen-Donanldson-Sun \cite{CDS1, CDS2, CDS3} and Tian \cite{Tia15} showed that a Fano manifold admits a K\"ahler-Einstein metrics if and only if it satisfies K-polystablility which is an algebro-geometric condition originates on geometric invariant theory.

Some generalizations of K\"ahler-Einstein metrics for Fano manifolds has also been discussed.
In this paper, we focus on coupled K\"ahler-Einstein metrics introduced recently by Hultgre-WittNystr\"om \cite{HN18}.
Let $X$ be an $n$-dimensional Fano manifold.
A decomposition of the first Chern class $2\pi c_1(X)$ is a sum
$$2\pi c_1(X)=\alpha_1 + \cdots + \alpha_N$$
where each $\alpha_i$ is a K\"ahler class.
For a K\"ahler metric $\omega_i\in \alpha_i$, the pair $\comega$ is called the {\it coupled K\"ahler-Einstein metric} for the decomposition $\calpha$ if it satisfies the system
$$\mathrm{Ric}(\omega_1)=\cdots = \mathrm{Ric}(\omega_N)=\sum_{i=1}^{N}\omega_i.$$
Note that the ordinary K\"ahler-Einstein metric $\omega_{\rm KE}\in2\pi c_1(X)$ can be seen as a trivial coupled K\"ahler-Einsten metric. 
Indeed, for fixed $\lambda_i \in \mathbb{R}_{>0}$ satisfying $\sum_{i=1}^N\lambda_i=1$, the pair $(\lambda_i\omega_{\rm KE})_{i=1}^N$ defines a coupled K\"ahler-Einstein metric for the decomposition $(\lambda_i 2\pi c_1(X))_{i=1}^N$.

Coupled K\"ahler-Einstein metrics does not always exist for any decomposition $\calpha$.
In fact, the Matsushima type obstruction theorem and the Futaki type obstruction theorem
 %\cite{HN18} (see also \cite{FZ18, DP19}) for $(\alpha_i)_i$ 
 are known.
 The Matsushima type obstruction theorem states that if $\comega$ is a coupled K\"ahler-Einstein metric, then the identity component of the holomorphic automorphism group ${\rm Aut}_0(X)$ is the complexification of the identity component of the isometry group ${\rm Isom}_0(X, \omega_1)$ of $\omega_1$, 
 and in particular, it is then reductive \cite{HN18, FZ18}.
 Note that, in this case, ${\rm Isom}_0(X, \omega_1)={\rm Isom}_0(X, \omega_2)=\cdots ={\rm Isom}_0(X, \omega_N)$ as pointed out in \cite{Tak19}.
On the other hand, for the Futaki type obstruction, the Futaki type invariant is defined in the following way.
First define a function $f_i \in C^{\infty}(X, \mathbb{R})$ for each $i=1,2, \dots , N$ as
\begin{equation}\label{Ricci}
{\rm Ric}(\omega_i)-\sum_{j=1}^N\omega_j = \deldel f_i \quad\text{and}\quad \int_X(1-e^{f_i})\omega_i^n=0.
\end{equation}
In this paper, we call the pair $(f_i)_{i=1}^N$ {\it the Ricci potential} for $\comega$.
Note that the pair $\comega$ defines a coupled K\"ahler Einstein metric if and only if every Ricci potential $f_i$ vanishes.
For any holomorphic vector field $V$ on $X$, {\it the Futaki type invariant} ${\rm Fut}_c$ for the decomposition $\calpha$ is defined by
\begin{equation}
{\rm Fut}_c(V)=\sum_{i=1}^N\int_X H_i(1-e^{f_i})\frac{\omega_i^n}{\int_X\omega_i^n}
\end{equation}
where $H_i$ is the potential function for $V$, that is, $i_V\omega_i=\sqrt{-1}\dbar H_i$.
Following \cite{HN18, FZ18, DP19}, the value ${\rm Fut}_c(V)$ is independent of the choice of metrics $\omega_i\in\alpha_i$.
In particular, if there exists a coupled K\"ahler Einstein metric for $\calpha$, then the invariant ${\rm Fut}_c$ must vanish identically.
Recently Futaki-Zhang \cite{FZ19} showed a residue formula to compute this invariant.

In the same spirit of theory for ordinary K\"ahler-Einstein metrics for Fano manifolds, Hultgren-WittNystr\"om \cite{HN18} established fundamental properties for coupled K\"ahler-Einstein metrics.
It was shown a uniqueness theorem, 
and a stability theorem which states that the existence for a coupled K\"ahler-Einstein metric implies an algebro-geometric stability condition when $\alpha_i=2\pi c_1(L_i)$ for some ample line bundle $L_i$ over $X$.
%As the existence theorem, it was shown that ordinary K\"ahler-Einstein-metrics for Fano manifolds without non-trivial holomorphic vector fields can be deformed to a non-trivial coupled K\"ahler-Einstein metric.
%As explained later, the main result of this paper is a generalization of this existence theorem.
The work of Hultgren-WittNystr\"om has raised much interest in the study of coupled K\"ahler-Einstein metrics.
In particular, the existence theorem were developed.
Pingali \cite{Pin18} and Takahashi \cite{Tak19} introduced a continuity method and a Ricci iteration method respectively to construct coupled K\"ahler-Einstein metrics.
Hutgren \cite{Hul17} developed a detailed study for the existence of such metrics on toric Fano manifolds, and Delcroix-Hultgren \cite{DH18} extended it to more general settings.
Futaki-Zhang \cite{FZ18} introduced Sasakian analogue.
Datar-Pingali \cite{DP19} introduced the notion of coupled constant scalar curvature K\"ahler metrics which is a generalization of coupled K\"ahler-Einstein metrics, and also introduced a framework of geometric invariant theory for them.
Takahashi \cite{Tak19-2} introduced a geometric quantization.% for coupled K\"ahler-Einstein metrics.

%\subsection{Main result}
%We consider  deformation of coupled K\"ahler-Einstein metrics by following a strategy of deformation theory for constant scalar curvature K\"ahler metrics on K\"ahler manifolds \cite{LS94, HLi17}.
%The first result is to obtain a necessary and sufficient condition for an initial coulpled K\"ahler-Einstein metric to be deformed to another one for another 
%close decomposition for Fano manifolds admitting non-trivial holomorphic vector fields.
Now we state results in this paper.
As an existence theorem for non-trivial coupled K\"ahler-Einstein metrics, Hultgren-WittNystr\"om \cite{HN18} proved the following;
\begin{theorem}{\rm \cite[Theorem C]{HN18}}\label{HN}
Let $X$ be a Fano K\"ahler-Einstein manifold without non-trivial holomorphic vector fields.
Fix positive real constants $\lambda_i>0$ satisfying $\sum_{i=1}^N\lambda_i=1$.
Then for any real closed $(1,1)$-forms $\eta_1, \dots,\eta_N$ satisfying $\sum_{i=1}^N[\eta_i]=0$, there exists a coupled K\"ahler-Einstein metric for the decomposition $(2\pi \lambda_i c_1(X)+t[\eta_i])_{i=1}^N$ for $0<t \ll 1$.
\end{theorem}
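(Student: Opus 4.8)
The plan is to view the desired coupled K\"ahler--Einstein metric, at $t=0$, as the model $(\lambda_i\ke)_{i=1}^N$ for the decomposition $(2\pi\lambda_ic_1(X))_{i=1}^N$ and to continue it for small $t$ by the implicit function theorem. Put $\omega_i^0(t)=\lambda_i\ke+t\eta_i$; these are K\"ahler for $0\le t\ll1$. Since $[\ke]=2\pi c_1(X)$, $\sum_i\lambda_i=1$ and $\sum_i[\eta_i]=0$, both $\mathrm{Ric}(\omega_i^0(t))$ and $\sum_j\omega_j^0(t)$ represent $2\pi c_1(X)$, so by the $\partial\dbar$-lemma there are $F_i(t)\in C^\infty(X,\mathbb{R})$, depending smoothly on $t$ and normalized by $\int_XF_i(t)\,\omega_i^0(t)^n=0$, with
\begin{equation}\label{eq:refpot}
\mathrm{Ric}(\omega_i^0(t))-\sum_j\omega_j^0(t)=\deldel F_i(t),\qquad F_i(0)\equiv0 .
\end{equation}
Writing $\omega_i=\omega_i^0(t)+\deldel\phi_i$ and using $\mathrm{Ric}(\omega_i)=\mathrm{Ric}(\omega_i^0(t))-\deldel\log\bigl(\omega_i^n/\omega_i^0(t)^n\bigr)$, the system $\mathrm{Ric}(\omega_i)=\sum_j\omega_j$ is equivalent, after taking $\deldel$-potentials, to the coupled complex Monge--Amp\`ere system
\begin{equation}\label{eq:MAsystem}
\bigl(\omega_i^0(t)+\deldel\phi_i\bigr)^n=e^{\,F_i(t)-\sum_j\phi_j+c_i}\,\omega_i^0(t)^n,\qquad i=1,\dots,N,
\end{equation}
where each $c_i=c_i(t,(\phi_j))$ is the unique constant making the two sides have equal total mass; at $(t,(\phi_i))=(0,(0))$ this holds with all $c_i=0$.

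Next I would set up the implicit function theorem. Fix $k\ge1$, $0<\alpha<1$, let $\mathcal B\subset (C^{k+2,\alpha}(X,\mathbb{R}))^N$ be the closed subspace of tuples with $\int_X\phi_i\,\ke^n=0$ for all $i$ (this normalizes away the freedom $\phi_i\mapsto\phi_i+\mathrm{const}$, which is absorbed into $c_i$), let $\dot C^{k,\alpha}(X,\mathbb{R})$ denote functions of vanishing $\ke^n$-average, and define
\[
G\colon (-t_0,t_0)\times\mathcal B\longrightarrow \bigl(\dot C^{k,\alpha}(X,\mathbb{R})\bigr)^N
\]
by letting the $i$-th component of $G(t,(\phi_j))$ be $\log\bigl(\omega_i^0(t)+\deldel\phi_i\bigr)^n/\omega_i^0(t)^n-F_i(t)+\sum_j\phi_j$ minus its $\ke^n$-average. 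Then $G$ is a smooth map of Banach spaces near $(0,(0))$ (here one uses that $\omega_i^0(t)$ stays K\"ahler and that $F_i(t)$ depends smoothly on $t$ via \eqref{eq:refpot} and elliptic theory), $G(0,(0))=0$, and $G(t,(\phi_j))=0$ is equivalent to \eqref{eq:MAsystem}. Differentiating in the $\phi$-directions at $(0,(0))$, with $\tfrac{d}{ds}\big|_0\log\bigl((\lambda_i\ke+s\deldel u)^n/(\lambda_i\ke)^n\bigr)=\mathrm{tr}_{\lambda_i\ke}(\deldel u)=\tfrac1{\lambda_i}\Delta_\ke u$, gives
\begin{equation}\label{eq:linearized}
D_\phi G(0,(0))\,(u_i)_{i=1}^N=\Bigl(\tfrac1{\lambda_i}\Delta_\ke u_i+\sum_j u_j\Bigr)_{i=1}^N ,
\end{equation}
where $\Delta_\ke u=\mathrm{tr}_\ke(\deldel u)$ (on $\mathcal B$ the average-subtraction is vacuous since each $u_i$ has zero $\ke^n$-average).

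The crux is that \eqref{eq:linearized} is an isomorphism $\mathcal B\to(\dot C^{k,\alpha})^N$. Its principal part is the diagonal elliptic operator $\bigoplus_i\tfrac1{\lambda_i}\Delta_\ke$, which is an isomorphism between the corresponding mean-zero H\"older spaces, and the coupling map $(u_i)\mapsto(\sum_j u_j)_i$ factors through the compact embedding $C^{k+2,\alpha}\hookrightarrow C^{k,\alpha}$; hence \eqref{eq:linearized} is Fredholm of index $0$, so it suffices to prove injectivity. Assume $\tfrac1{\lambda_i}\Delta_\ke u_i+\sum_j u_j$ is constant for all $i$, with $\int_X u_i\,\ke^n=0$. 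Integrating against $\ke^n$ shows that constant is $0$, so with $s:=\sum_j u_j$ one gets $\Delta_\ke u_i=-\lambda_i s$, i.e.\ $u_i=-\lambda_i\Delta_\ke^{-1}s$; summing over $i$ and using $\sum_i\lambda_i=1$ yields $\Delta_\ke s=-s$. By the Matsushima--Lichnerowicz eigenvalue estimate on the K\"ahler--Einstein Fano manifold $(X,\ke)$, $\mathrm{Ric}(\ke)=\ke$, the first nonzero eigenvalue of $-\Delta_\ke$ is $\ge1$, with equality if and only if $X$ carries a nonzero holomorphic vector field; since it does not, $-1\notin\mathrm{spec}(\Delta_\ke)$, forcing $s=0$ and hence all $u_i=0$. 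Therefore \eqref{eq:linearized} is invertible, and the implicit function theorem gives, for $0<t\ll1$, a unique small $(\phi_i(t))\in\mathcal B$ with $G(t,(\phi_j))=0$, smooth in $t$. As $\phi_i(t)\to0$, the forms $\omega_i(t)=\omega_i^0(t)+\deldel\phi_i(t)$ are K\"ahler; by \eqref{eq:MAsystem} they constitute a coupled K\"ahler--Einstein metric for $(2\pi\lambda_ic_1(X)+t[\eta_i])_{i=1}^N$, and elliptic regularity makes them $C^\infty$.

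The main obstacle is exactly the invertibility of \eqref{eq:linearized}: the hypothesis that $X$ has no non-trivial holomorphic vector fields enters only there, through the \emph{strict} inequality in the Matsushima--Lichnerowicz bound, and the normalization $\sum_i\lambda_i=1$ is used precisely so that the potentially resonant eigenvalue $-1$ of $\Delta_\ke$ is the one excluded by the Einstein constant of $\ke$. Everything else --- that $\omega_i^0(t)$ stays K\"ahler, the smooth $t$-dependence of $F_i(t)$, and the bookkeeping of the constants $c_i$ and of the normalization of the $\phi_i$ so that $G$ is well defined between the stated Banach spaces --- is routine.
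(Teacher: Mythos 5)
Your proof is correct, and its skeleton---perturb off the product solution $(\lambda_i\ke)_{i=1}^N$, apply the implicit function theorem, and reduce everything to the invertibility of the linearization $(u_i)\mapsto(\Delta_{\theta_i}u_i+\sum_j u_j)_i$ on mean-zero functions---is the same as the route the paper takes, since the paper recovers Theorem \ref{HN} as the special case of Theorem \ref{main thm} in which the automorphism group is discrete, so that $\mathcal{H}_{\frak{z}}=0$ and the obstruction function $\mathcal{F}$ vanishes identically. The genuine difference lies in how the kernel of that linearization is handled. The paper works with the Ricci-potential operator $1-e^{f_i}$ in Sobolev spaces, invokes Takahashi's general characterization of $\Ker\mathbb{L}$ as the space of holomorphic potential vectors, projects that kernel out, and then needs the Futaki-type invariant to detect when the residual finite-dimensional defect vanishes; none of that machinery is required in the discrete-automorphism case. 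You instead use the coupled Monge--Amp\`ere formulation in H\"older spaces and prove triviality of the kernel directly and self-containedly: the reduction $u_i=-\lambda_i\Delta_{\ke}^{-1}s$ with $s=\sum_j u_j$, leading to $\Delta_{\ke}s=-s$, which the Lichnerowicz--Matsushima bound (first nonzero eigenvalue of $-\Delta_{\ke}$ strictly greater than $1$ on a Fano K\"ahler--Einstein manifold without holomorphic vector fields) rules out. This cleanly isolates where both hypotheses enter ($\sum_i\lambda_i=1$ places the potential resonance exactly at the excluded eigenvalue, and discreteness of the automorphism group excludes it), and your Fredholm-index-zero observation (diagonal isomorphism plus compact coupling) correctly upgrades injectivity to invertibility; what the paper's heavier apparatus buys is the generalization to nontrivial coupled backgrounds and to manifolds with continuous automorphisms.
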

The main purpose of this paper is to extend the above theorem from view points of (i) the case when a Fano manifold admits continuous automorphism group and (ii) the case of deformation for a non-trivial coupled K\"ahler-Einstein metric.
We follow the strategy of deformation theory for constant scalar curvature K\"ahler metrics \cite{LS94, HLi17} and extremal K\"ahler metrics \cite{RST13}.
Let $X$ be a Fano manifold admitting a coupled K\"ahler-Einstein metric $\ctheta$ for a decomposition $(\alpha_i)_{i=1}^N$. 
%and $G$ be the isometry group ${\rm Isom}(X, \theta_1)$.
Take a K\"ahler metric $\omega_0$ defined by $\mathrm{Ric}(\omega_0)=\sum_{i=1}^N\theta_i$,
We define
\begin{equation}
U=\Set{\eta=(\eta_1, \dots, \eta_N) | \eta_i \text{ is a $\mathbb{R}$-valued $\theta_i$-harmonic $(1,1)$-form s.t. $\sum_{i=1}^N [\eta_i]=0$}}.
\end{equation}
%Take a K\"ahler metric $\omega_0$ defined by $\mathrm{Ric}(\omega_0)=\sum_{i=1}^N\theta_i$,
and define  $U_0=\set{\eta\in U | \|\eta\|_{\omega_0}=1}$.
The following is the main result of this paper.

\begin{theorem}\label{main thm}
There exists $\e_0>0$ and a smooth function $\mathcal{F}: [0, \e_0)\times U_0 \to\mathbb{R}$
such that if $\eta\in U_0$ satisfies $\mathcal{F}(t, \eta)=0$ for some $t\in [0, \e_0)$, then there exists a coupled K\"ahler-Einstein metric for the decomposition $(\alpha_i+t[\eta_i])_{i=1}^N$.

Moreover, if $\mathrm{tr}_{\theta_i}\eta_i=0$ for all $i$, 
then the leading term of the asymptotic expansion of the function $\mathcal{F}(t,\eta)$ around $t=0$ is at least of order $2$.
%and its leading coefficient is described in terms of initial data.
If  $\mathrm{tr}_{\theta_i}\eta_i=0$ for all $i$ 
and if $\ctheta$ is a trivial coupled K\"ahler-Einstein metric $(\lambda_i\ke)_{i=1}^N$, 
then the leading term is at least of order $4$.
Furthermore these leading coefficient are described explicitly in terms of initial data
%Moreover the function $\mathcal{F}$ has an asymptotic expansion of order $2$ at $t=0$ in general. 
%In the case when $\ctheta$ is a trivial coupled K\"ahler-Einstein metric $(\lambda_i\ke)_{i=1}^N$, the asymptotic expansion is of order $4$.
%These leading terms are obtained explicity in terms of initial data 
(See section \ref{expansion} for the explicit description of these leading coefficients).
\end{theorem}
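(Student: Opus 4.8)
The plan is to set up a Lyapunov--Schmidt reduction for the coupled Monge--Amp\`ere system, following the strategy used for constant scalar curvature and extremal metrics. First I would parametrize nearby K\"ahler metrics in the classes $(\alpha_i + t[\eta_i])_{i=1}^N$ by writing $\omega_i = \theta_i + t\eta_i + \deldel \varphi_i$ with $\varphi_i$ in a suitable H\"older (or Sobolev) space, normalized so that $\int_X \varphi_i\, \theta_i^n = 0$. The coupled K\"ahler-Einstein condition is equivalent to the vanishing of the Ricci potentials $f_i(\varphi, t, \eta)$ defined by \eqref{Ricci}; expanding this gives a nonlinear map $\Phi(\varphi, t, \eta) = (f_1, \dots, f_N)$ between Banach spaces with $\Phi(0,0,\eta)=0$ when $t=0$. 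The key computation is the linearization $D_\varphi \Phi|_{(0,0,\eta)}$: because $\ctheta$ is coupled K\"ahler-Einstein, this linearized operator is (a direct sum type expression involving) $\Delta_{\theta_i} + 1$ on each factor, coupled through the common $\sum_j \deldel\psi_j$ term, and its kernel and cokernel are controlled by the Matsushima-type result quoted in the introduction --- namely the space of holomorphy potentials $\mathfrak{h}$ associated to $\mathrm{Aut}_0(X)$, which is the same for all $\omega_i$.

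The second step is the Lyapunov--Schmidt decomposition. Let $K$ denote the (finite-dimensional) kernel of the linearized operator and $K^\perp$ its $L^2$-orthogonal complement; the linearized operator is an isomorphism $K^\perp \to K^\perp$ after projecting off $K$. By the implicit function theorem applied to $P^\perp \circ \Phi = 0$, for $t$ small and each $\eta \in U_0$ there is a unique small solution $\varphi = \varphi(t,\eta) \in K^\perp$, depending smoothly on $(t,\eta)$, solving the infinite-dimensional part of the equation. Substituting this back produces the \emph{finite-dimensional reduced equation} $P \circ \Phi(\varphi(t,\eta), t, \eta) = 0$, a map from $[0,\e_0)\times U_0$ into $K^\ast$. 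Since the relevant balancing obstruction is one-dimensional in spirit --- it is governed by the Futaki-type invariant $\mathrm{Fut}_c$ pairing against holomorphy potentials --- one extracts a single scalar $\mathcal{F}(t,\eta)$ (by pairing the reduced map against the appropriate generator, or taking its norm) whose vanishing is equivalent to solvability; this gives the first assertion of the theorem. The main obstacle here is verifying that the linearized operator really is Fredholm of index zero with kernel exactly the holomorphy potentials and that the projection onto the obstruction is nondegenerate off the kernel --- this is where the reductivity from the Matsushima-type theorem, and the identity $\mathrm{Isom}_0(X,\omega_i)$ being independent of $i$, are essential.

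The third step is the asymptotic expansion. One Taylor-expands $\varphi(t,\eta) = t\varphi^{(1)} + t^2 \varphi^{(2)} + \cdots$ and correspondingly $\mathcal{F}(t,\eta) = \mathcal{F}^{(1)}(\eta)\, t + \mathcal{F}^{(2)}(\eta)\, t^2 + \cdots$, solving recursively. The linear term $\mathcal{F}^{(1)}$ is computed by differentiating the Ricci potential identity once in $t$ at $t=0$: the variation of $\mathrm{Ric}(\omega_i) - \sum_j \omega_j$ in the direction $\eta_i$ involves $\mathrm{tr}_{\theta_i}\eta_i$ (the trace part of $\eta_i$) and an exact form; pairing against holomorphy potentials and using that $\sum_i [\eta_i] = 0$, the coefficient $\mathcal{F}^{(1)}(\eta)$ is seen to vanish precisely when $\mathrm{tr}_{\theta_i}\eta_i = 0$ for all $i$, proving the ``order $\ge 2$'' claim. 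For the trivial coupled metric $(\lambda_i\ke)_{i=1}^N$, one further checks that the quadratic and cubic coefficients $\mathcal{F}^{(2)}, \mathcal{F}^{(3)}$ also vanish --- this uses the extra symmetry of the trivial solution (all $\omega_i$ proportional, so the coupling collapses and the second-order correction $\varphi^{(2)}$ itself contributes only through higher-order pairings) together with the fact that the obstruction is an odd-order derivative of an energy-type functional, which forces the first surviving term to be quartic. The explicit leading coefficients are then read off from these recursion formulas: $\mathcal{F}^{(2)}$ in the general harmonic case and $\mathcal{F}^{(4)}$ in the trivial case, expressed through $\eta$, the harmonic representatives, and the Green operator of $\Delta_{\theta_i}+1$, as recorded in Section \ref{expansion}. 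I expect the most delicate bookkeeping --- though not a conceptual obstacle --- to be tracking the cross-terms among the $N$ coupled equations in the second- and fourth-order expansions, since the coupling through $\sum_j \omega_j$ mixes the factors at every stage.
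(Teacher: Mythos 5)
Your overall strategy --- parametrize the metrics by potentials, linearize the Ricci-potential map, identify the kernel with holomorphy potentials via the Matsushima-type theorem, solve off the kernel by the implicit function theorem, and then analyze the finite-dimensional residual --- is exactly the paper's. But there is a genuine gap in how you pass from the finite-dimensional residual to a single scalar $\mathcal{F}$. The kernel of the linearized operator (restricted to $G$-invariant functions) is $\mathbb{R}^N\oplus\mathcal{H}_{\frak{z}}$, which has dimension $d=N+\dim\frak{z}$ and is \emph{not} one-dimensional in general; pairing the residual against ``the appropriate generator'' gives only one of $d$ scalar conditions and does not imply solvability, while taking its norm trivializes the first claim and severs the link to the Futaki-type invariant that makes the theorem useful (and that recovers Theorem \ref{HN}). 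The paper's resolution is to set $\mathcal{F}(t,\eta)=\mathrm{Fut}_c(V(t,\eta))$, where $V(t,\eta)$ is the holomorphic vector field generated by the residual itself, i.e.\ by $(1-e^{f_i(t,\eta)})_{i=1}^N=\sum_p c_p(t,\eta)\bm{v}_p$, and then to prove the quantitative estimate $\|1-e^{f_i(t,\eta)}-H_i(t,\eta)\|_{L^2(\omega_0)}\le C\e_0\|c(t,\eta)\|_{\mathrm{Euc}}$, which combined with Cauchy--Schwarz yields $\|c\|_{\mathrm{Euc}}^2\le C\e_0\|c\|_{\mathrm{Euc}}^2$ whenever $\mathcal{F}(t,\eta)=0$, forcing $c=0$ for $\e_0$ small. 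This self-improving estimate is the heart of Theorem \ref{part1} and is absent from your proposal.

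Two further points in the expansion are off. First, $\mathcal{F}^{\prime}(0,\eta)$ vanishes automatically, because $\mathcal{F}$ is a pairing of $H_i$ against $1-e^{f_i}$ and both factors vanish at $t=0$; it is not the condition $\mathrm{tr}_{\theta_i}\eta_i=0$ that kills the first-order term (that hypothesis only enters to make $f_i^{\prime}(0,\eta)=-\pi_{\frak{z}}(\bm{h}_{\eta})_i$, so that the second-order coefficient becomes $\int_X|\pi_{\frak{z}}(\bm{h}_{\eta})|^2\,\omega_0^n/\int_X\omega_0^n$), so your ``vanishes precisely when'' is incorrect. Second, in the trivial case the vanishing of $\mathcal{F}^{(3)}(0,\eta)$ is not a parity phenomenon of an energy functional: it follows because $\bm{h}_{\eta}=0$ forces $f_i^{\prime}(0,\eta)=H_i^{\prime}(0,\eta)=\phi_i^{\prime}(0,\eta)=0$, so both factors in the pairing vanish to second order in $t$ and the product to fourth, with leading coefficient $\tfrac{1}{4}\int_X|\pi_{\frak{z}}(\bm{I}_{\eta})|^2\,\ke^n/\int_X\ke^n$ coming from $H_i^{\prime\prime}(0,\eta)=-f_i^{\prime\prime}(0,\eta)$ and $(f_i^{\prime\prime}(0,\eta))_{i=1}^N=\pi_{\frak{z}}(\bm{I}_{\eta})$. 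These derivative identities, not a symmetry heuristic, are what the argument requires.
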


This theorem is divided to Theorem \ref{part1}, Proposition \ref{part2} and Proposition \ref{part3}.
The function $\mathcal{F}$ in Theorem\ref{main thm} tells us in which directions we can find a coupled K\"ahler-Einstein metric.
The function $\mathcal{F}$ is in fact given by the Futaki type invariant ${\rm Fut}_c(V_{t, \eta})$ 
for the decomposition $(\theta_i+t[\eta_i])_{i=1}^N$, where $V_{t, \eta}$ is a holomorphic vector field depending on $t\in [0, \e_0)$ and $\eta \in U_0$.
Therefore Theorem\ref{main thm} is a generalization of Theorem \ref{HN}.

In view of Theorem \ref{main thm}, it is natural to discuss the case when $\mathcal{F}(t, \eta) \neq 0$ for some small $t \neq 0$. 
%but this function has only an asymptotic expansion of some order at $t=0$.
%In Theorem\ref{main thm}, if $\mathcal{F}(t, \eta) \neq 0$, then there exists no coupled K\"ahler Einstein metrics for $(\lambda_i[\ke]+t[\eta_i])_i$.
%However,  if it has an asymptotic expansion $\mathcal{F}(t, \eta)=O(t^m)$ for some $m\in\mathbb{N}$, 
In this case there exists no coupled K\"ahler-Einstein metric for the decomposition $(\alpha_i+t[\eta_i])_{i=1}^N$.
However, under the assumption that the function $\mathcal{F}$ has an asymptotic expansion of some order at $t=0$,
we can construct an almost coupled K\"ahler-Einstein metric in the following sense;
\begin{corollary}\label{Cor}
%Let $(X, \ke)$ be a Fano K\"ahler Einstein manifold.
Suppose the function $\mathcal{F}$ in Theoren\ref{main thm} has an asymptotic expansion $\mathcal{F}(t, \eta)=a_1(\eta)t+a_2(\eta)t^2+\cdots$ as $t\to 0$ with
$a_1(\eta)=a_2(\eta)=\cdots =a_m(\eta)=0$
for some $\eta\in U_0$ and for some positive integer $m$.
Then there exists $\e_0, C>0$ such that for any $i=1,2, \dots, N$ and any $t\in (0, \e_0)$, there exists a K\"ahler metric $\omega_i(t, \eta)$ in $\alpha_i+t[\eta_i]$ satisfying
$$\| 1- e^{f_i(t, \eta)} \|_{C^l(X, \mathbb{R})} \leq C t^{\frac{m+1}{2}},$$
where $(f_i(t,\eta))_{i=1}^N$ is the Ricci potential for $(\omega_i(t,\eta))_{i=1}^N$, and $l$ is some positive integer.
\end{corollary}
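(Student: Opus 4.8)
The plan is to deduce Corollary \ref{Cor} from Theorem \ref{main thm} together with a quantitative form of the implicit-function-type construction that must underlie the definition of $\mathcal{F}$. Recall that $\mathcal{F}(t,\eta)$ is the residual obstruction: after solving, for each fixed $(t,\eta)$, an auxiliary finite-dimensional reduced equation one obtains a genuine coupled K\"ahler-Einstein metric precisely when $\mathcal{F}(t,\eta)=0$. So the key structural fact I would isolate first is that the construction actually produces, for every $(t,\eta)\in[0,\e_0)\times U_0$, a candidate family of K\"ahler metrics $\omega_i(t,\eta)\in\alpha_i+t[\eta_i]$ together with a holomorphic vector field $V_{t,\eta}$ such that the Ricci potential $(f_i(t,\eta))_{i=1}^N$ of $(\omega_i(t,\eta))_{i=1}^N$ is controlled, in a fixed $C^l$ norm, by $|\mathcal{F}(t,\eta)|$ — more precisely $\|1-e^{f_i(t,\eta)}\|_{C^l}\le C\,|\mathcal{F}(t,\eta)|$ uniformly in $(t,\eta)$. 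This is the analytic heart and should come out of the Lyapunov–Schmidt reduction in the proof of Theorem \ref{main thm}: the full nonlinear operator restricted to the complement of the (approximate) kernel is invertible with uniformly bounded inverse near $t=0$, so the only obstruction to solving is the projection onto the kernel, which is exactly $\mathcal{F}$, and the size of the unsolved error in the solved directions is dominated by the size of that projection.

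Granting that, the proof is short. First I would invoke the asymptotic expansion hypothesis: $\mathcal{F}(t,\eta)=a_1(\eta)t+a_2(\eta)t^2+\cdots$ with $a_1(\eta)=\cdots=a_m(\eta)=0$. By definition of an asymptotic expansion to order $m$ (and using that $\mathcal{F}$ is smooth so the remainder is controlled), there is a constant $C_1>0$ and $\e_1\in(0,\e_0)$ such that $|\mathcal{F}(t,\eta)|\le C_1 t^{m+1}$ for all $t\in(0,\e_1)$. Then combine with the uniform estimate from the previous paragraph: for $\omega_i(t,\eta)$ the metric produced by the reduction,
\[
\|1-e^{f_i(t,\eta)}\|_{C^l(X,\mathbb{R})}\le C\,|\mathcal{F}(t,\eta)|\le C\,C_1\,t^{m+1}.
\]
This already gives a bound of order $t^{m+1}$, which is stronger than the claimed $t^{(m+1)/2}$; so setting $C=C\,C_1$ and $\e_0=\e_1$ finishes the argument. (If instead the clean $C^l\le C|\mathcal{F}|$ bound requires a half-power loss — e.g. because the metric $\omega_i(t,\eta)$ is only an approximate solution of the reduced equation with error $O(|\mathcal{F}|)$ measured in a weaker norm and one upgrades via a Sobolev/elliptic estimate that costs a square root — then the bound $|\mathcal{F}|\le C_1 t^{m+1}$ still yields $t^{(m+1)/2}$, matching the statement exactly; I would phrase the lemma to give whichever of these the Theorem \ref{main thm} machinery naturally produces.)

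I would organize the writeup as: (1) extract from the proof of Theorem \ref{part1}/Proposition \ref{part2} the statement that the reduction produces $\omega_i(t,\eta)$ with $\|1-e^{f_i(t,\eta)}\|_{C^l}\lesssim \sqrt{|\mathcal{F}(t,\eta)|}$ uniformly for $(t,\eta)\in[0,\e_0)\times U_0$ (this is the cited lemma, essentially a byproduct already available); (2) quote the Taylor/asymptotic remainder estimate $|\mathcal{F}(t,\eta)|\le C_1 t^{m+1}$ coming from the vanishing of $a_1,\dots,a_m$ and smoothness of $\mathcal{F}$; (3) chain the two inequalities. The main obstacle is step (1): one must make sure the norm in which the Lyapunov–Schmidt solved part is controlled is a fixed Hölder or Sobolev space independent of $t$ and $\eta$, with a uniform constant as $t\to 0^+$ and over the compact set $U_0$ — this is where the hypothesis that $\theta_i$ is a coupled K\"ahler-Einstein metric (so the linearized operator is an isomorphism off a fixed finite-dimensional space, with Matsushima reductivity pinning down that space) is used, and where one has to check that the construction degenerates continuously at $t=0$. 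Everything after that is elementary.
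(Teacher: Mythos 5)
Your overall skeleton --- bound $|\mathcal{F}(t,\eta)|\le C t^{m+1}$ from the vanishing of $a_1,\dots,a_m$, then convert this into a bound on $\|1-e^{f_i(t,\eta)}\|_{C^l}$ using the metrics already produced by the reduction --- is exactly the paper's. But the one step that carries all the content, your ``key structural fact,'' is stated in a form that is false, and the correct form appears only as a hedge with the wrong justification. The primary claim $\|1-e^{f_i(t,\eta)}\|_{C^l}\le C\,|\mathcal{F}(t,\eta)|$ cannot hold: by construction $\mathcal{F}(t,\eta)=\sum_i\int_X H_i(t,\eta)(1-e^{f_i(t,\eta)})\,\omega_i(t,\eta)^n/\int_X\omega_i(t,\eta)^n$, and the estimate \eqref{compare} says $H_i(t,\eta)$ is $L^2$-close to $1-e^{f_i(t,\eta)}$ with error $C\e_0\|c(t,\eta)\|_{\rm Euc}$, so $\mathcal{F}$ is \emph{quadratic} in the error: $\bigl|\mathcal{F}(t,\eta)-\sum_i\int_X(1-e^{f_i})^2\,\omega_i^n/\int_X\omega_i^n\bigr|\le C\e_0\|c(t,\eta)\|_{\rm Euc}^2$. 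Combining this with $\|c(t,\eta)\|_{\rm Euc}^2\le C\sum_i\int_X(1-e^{f_i})^2\,\omega_i^n/\int_X\omega_i^n$ (already established in the proof of Theorem \ref{part1}) and absorbing the $C\e_0\|c\|_{\rm Euc}^2$ term for $\e_0$ small yields $\|c(t,\eta)\|_{\rm Euc}^2\le C|\mathcal{F}(t,\eta)|\le Ct^{m+1}$. A square root is therefore forced, and $t^{(m+1)/2}$ is exactly what the statement records; the ``stronger'' $t^{m+1}$ bound you announce in your main paragraph is not available by this route.

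The second problem is the mechanism you offer for the half power and for the $C^l$ upgrade. The square root has nothing to do with an elliptic or Sobolev estimate losing half a derivative or half a power; it comes solely from $\mathcal{F}$ being a quadratic functional of the error, as above. And the passage from the bound on $\|c(t,\eta)\|_{\rm Euc}$ to the $C^l$ bound is free: equation \eqref{GKE} places $(1-e^{f_1(t,\eta)},\dots,1-e^{f_N(t,\eta)})=\sum_{p=1}^d c_p(t,\eta)\bm{v}_p$ in the \emph{fixed finite-dimensional space} $\mathbb{R}^N\oplus\mathcal{H}_{\frak{z}}$ spanned by the smooth vectors $\bm{v}_p$, on which all norms are equivalent, so $\|1-e^{f_i(t,\eta)}\|_{C^l}\le C\|c(t,\eta)\|_{\rm Euc}$. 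Once your step (1) is replaced by these two observations --- both of which are, as you suspected, literal byproducts of the proof of Theorem \ref{part1} --- the remainder of your argument (the Taylor remainder bound and the chaining of inequalities) is correct.
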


The same technique as in Theorem \ref{main thm} allows us to discuss the deformation of a coupled K\"ahler-Einstein metric on a Fano manifold when the complex structure varies.
Let $(X,J)$ be an $n$-dimensional Fano manifold with a complex structure admitting a coupled K\"ahker-Einstein metric $(\theta_i)_{i=1}^N$.
Consider a complex deformation $(J(t), (\theta_i(t))_{i=1}^N)$ of $(J, (\theta_i)_{i=1}^N)$ satisfying $\sum_{i=1}^N[\theta_i(t)]=2\pi c_1(X,J(t))$.
In general, the action of $\mathrm{Isom}_0(X,\theta_1)$ may not extend to $(J(t), (\theta_i(t))_{i=1}^N)$ for $t\neq 0$.
Based on a work of Rollin-Simanca-Tipler \cite{RST13} in the context of complex deformation theory of extremal K\"ahler metrics, 
we assume that a compact connected subgroup $G'\subset\mathrm{Isom}_0(X,\theta_1)$ acts holomorphically on $(J(t), (\theta_i(t))_{i=1}^N)$.
We denote $B_{G'}$ by the space of all such complex deformations.
Let $W_{G'}^{l+2,2}(X)$ be the subspace of $G'$-invariant real functions in the sobolev space $W^{l+2,2}(X)$.  
Define an operator $\mathbb{L} : (W^{l+2,2}(X))^N\to (W^{l,2}(X))^N$ as follows;
\begin{eqnarray}\label{Lapracian}
    \mathbb{L}(u_1,\dots,u_N)=
    \left(
    \begin{array}{c}
      \Delta_{\theta_1}u_1+\sum_{j=1}^N u_j-\int_X\sum_{j=1}^N u_j \frac{\omega_0^n}{\int_X\omega_0^n}  \\
      \vdots \\
      \Delta_{\theta_N}u_N+\sum_{j=1}^N u_j-\int_X\sum_{j=1}^N u_j \frac{\omega_0^n}{\int_X\omega_0^n} \\
    \end{array}
    \right),
\end{eqnarray}
where $\Delta_{\theta_i}$ is the negative Laplacian for $\theta_i$.
Let $\mathcal{H}_{\frak{z'}}$ be the space of all functions $(u_1,\dots, u_N)\in (C_{G'}^{\infty}(X;\mathbb{R}))^N$
such that $\int_Xu_i\theta_i^n=0$ for each $i$ and $\mathrm{grad}_{\theta_1}u_1=\cdots =\mathrm{grad}_{\theta_N}u_N=V$ for some holomorphic vector field $V$ on $(X,J)$ corresponding to an element in $\frak{z'}$, where $\frak{z'}$ is the center of $\mathrm{Lie}(G')$.
Then we have
\begin{theorem}\label{Thm2}
Let $(X, J, (\theta_i)_{i=1}^N)$ be a Fano manifold with a complex structure admitting a coupled K\"ahler-Einstein metric satisfing
\begin{equation}\label{nondeg}
\Ker\mathbb{L}\cap(W_{G'}^{l+2,2}(X))^N\subset \mathbb{R}^N\oplus\mathcal{H}_{\frak{z'}}.
\end{equation}
For any $(J(t), (\theta_i(t))_{i=1}^N) \in B_{G'}$, there exists $\e_0>0$ and a smooth function $\mathcal{G}: [0,\e_0)\to\mathbb{R}$ such that
if $\mathcal{G}(t)=0$ for some $t\in [0,\e_0)$, then there exists a coupled K\"ahler-Einstein metric for the decomposition $([\theta_i(t)])_{i=1}^N$.
\end{theorem}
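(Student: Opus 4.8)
The plan is to run the Lyapunov--Schmidt reduction scheme used for Theorem~\ref{main thm} (modelled on \cite{LS94, HLi17, RST13}), now with $t$ moving the complex structure rather than the K\"ahler classes. Fix $(J(t),(\theta_i(t))_{i=1}^N)\in B_{G'}$. For $t$ near $0$ one looks for K\"ahler potentials $\phi=(\phi_1,\dots,\phi_N)$, normalized by $\int_X\phi_i\,\theta_i(t)^n=0$, such that $\omega_i(t):=\theta_i(t)+\sqrt{-1}\,\partial_{J(t)}\overline{\partial}_{J(t)}\phi_i$ satisfies $\mathrm{Ric}(\omega_i(t))=\sum_{j=1}^N\omega_j(t)$; by \eqref{Ricci} this is the vanishing of the Ricci potentials $f_i=f_i(\phi,t)$. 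Because $G'$ acts holomorphically and isometrically on every $(J(t),\theta_i(t))$, the operator $\mathcal{N}_t(\phi):=(1-e^{f_i(\phi,t)})_{i=1}^N$ is $G'$-equivariant and hence carries $G'$-invariant potentials to $G'$-invariant functions; it suffices to solve $\mathcal{N}_t(\phi)=0$ as a map $(W_{G'}^{l+2,2}(X))^N\to(W_{G'}^{l,2}(X))^N$, where $\mathcal{N}_0(0)=0$ since $(\theta_i)_{i=1}^N$ is coupled K\"ahler--Einstein. Working with $G'$-invariant functions is precisely what keeps the kernel of the linearization small.

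Differentiating $\mathcal{N}_t$ at $t=0$, $\phi=0$ and using the normalizations, $D_\phi\mathcal{N}_0(0)$ coincides, up to the normalization of its zeroth order term, with the operator $\mathbb{L}$ of \eqref{Lapracian}; its principal symbol is that of $\mathrm{diag}(\Delta_{\theta_1},\dots,\Delta_{\theta_N})$, so $\mathbb{L}$ is elliptic and Fredholm of index $0$ between the $G'$-invariant Sobolev spaces. Hence $\Image\mathbb{L}$ is closed of finite codimension $\dim\Ker\mathbb{L}$, and by elliptic regularity $\Ker\mathbb{L}$ and a chosen complement $C$ of $\Image\mathbb{L}$ lie in $(C_{G'}^{\infty}(X;\mathbb{R}))^N$. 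Now $\mathbb{R}^N\subset\Ker\mathbb{L}$ trivially, and $\mathcal{H}_{\frak{z'}}\subset\Ker\mathbb{L}$ because its members are potential tuples of holomorphic vector fields of $(X,J)$, whose flows preserve the coupled K\"ahler--Einstein condition; together with \eqref{nondeg} this gives $\Ker\mathbb{L}\cap(W_{G'}^{l+2,2}(X))^N=\mathbb{R}^N\oplus\mathcal{H}_{\frak{z'}}$, and $\dim C=N+\dim\mathcal{H}_{\frak{z'}}$.

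Now the reduction. The summand $\mathbb{R}^N$ (adding constants to the $\phi_i$) is inert: it changes neither $\omega_i(t)$ nor, thanks to the constraint $\int_X(1-e^{f_i})\omega_i^n=0$ in \eqref{Ricci}, the range of $\mathcal{N}_t$; we discard it by normalization. The summand $\mathcal{H}_{\frak{z'}}$ is generated by the holomorphic flows of $\frak{z'}$, which are automorphisms of $(X,J(t))$ and thus symmetries of $\mathcal{N}_t$ for every $t$; we restrict $\phi$ to a slice transversal to these orbits, which removes $\mathcal{H}_{\frak{z'}}$ from the domain while making $D_\phi\mathcal{N}_0(0)$ an isomorphism onto $\Image\mathbb{L}$ --- this is where \eqref{nondeg} enters. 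Letting $\Pi$ be the projection onto $\Image\mathbb{L}$ with kernel $C$, the implicit function theorem produces a smooth family $\phi=\phi(t)$ with $\phi(0)=0$ and $\Pi\,\mathcal{N}_t(\phi(t))=0$, and the residual obstruction $\mathcal{O}(t):=(\mathrm{Id}-\Pi)\mathcal{N}_t(\phi(t))=\mathcal{N}_t(\phi(t))\in C$ has the property that there is a coupled K\"ahler--Einstein metric in $([\theta_i(t)])_{i=1}^N$ as soon as $\mathcal{O}(t)=0$. Now $\mathcal{O}(t)=\mathcal{N}_t(\phi(t))$ pairs to zero against all constant tuples by the constraint in \eqref{Ricci}, while pairing it against the $\omega_i(t)$-Hamiltonian potentials of a vector field $V\in\frak{z'}$, weighted by $\omega_i(t)^n/\int_X\omega_i(t)^n$, reproduces the Futaki-type invariant $\mathrm{Fut}_c(V)$ of $([\theta_i(t)])_{i=1}^N$, which is independent of the chosen representatives and so depends on $t$ alone. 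As these pairings jointly detect the vanishing of $\mathcal{O}(t)\in C$, we get $\mathcal{O}(t)=0$ iff $\mathrm{Fut}_c$ vanishes on $\frak{z'}$; taking $V_t\in\frak{z'}$ to be the vector field corresponding to the $\frak{z'}$-component of $\mathcal{O}(t)$ and setting $\mathcal{G}(t):=\mathrm{Fut}_c(V_t)$ for $([\theta_i(t)])_{i=1}^N$ yields a smooth $\mathcal{G}:[0,\e_0)\to\mathbb{R}$ with $\mathcal{G}(0)=0$ for which $\mathcal{G}(t)=0$ forces $\mathcal{O}(t)=0$; then $\omega_i(t)=\theta_i(t)+\sqrt{-1}\,\partial_{J(t)}\overline{\partial}_{J(t)}\phi_i(t)$ is the desired coupled K\"ahler--Einstein metric. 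As in Theorem~\ref{main thm}, $\mathcal{G}$ is thus given by a Futaki-type invariant.

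The main obstacle is twofold. First, the ``moving target'': to differentiate in $t$ one must trivialize the bundle of K\"ahler potentials over the deformation parameter so that $J(t)$, the forms $\theta_i(t)$, the Laplacians $\Delta_{\theta_i(t)}$ and the Ricci potentials vary smoothly in fixed Banach spaces with the $G'$-equivariance preserved --- this is the role of $B_{G'}$ and of the deformation framework of \cite{RST13}, together with the Matsushima-type reductivity at $t=0$ that explains why the expected kernel consists of constants and of holomorphic vector fields in the centre. Second, the bookkeeping of the reduction: one must verify that after removing $\mathbb{R}^N$ by normalization and $\mathcal{H}_{\frak{z'}}$ by the $\frak{z'}$-action the operator $D_\phi\mathcal{N}_0(0)$ really is invertible onto its closed range, and that the leftover obstruction is exactly the Futaki-type invariant of the deformed decomposition --- the computation that ties this statement to Futaki-type obstruction theory, in parallel with the identification of $\mathcal{F}$ with $\mathrm{Fut}_c(V_{t,\eta})$ in Theorem~\ref{main thm}.
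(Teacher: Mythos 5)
Your overall architecture is the paper's: a Lyapunov--Schmidt reduction whose kernel and cokernel are $\mathbb{R}^N\oplus\mathcal{H}_{\frak{z'}}$ (this is exactly where \eqref{nondeg} enters), followed by the identification of the finite-dimensional obstruction with a Futaki-type invariant of the deformed decomposition. The gap is in your final step, where you write that the pairings against constants and against the $\omega_i(t)$-Hamiltonian potentials of $\frak{z'}$ ``jointly detect the vanishing of $\mathcal{O}(t)\in C$,'' so that $\mathcal{G}(t)=0$ forces $\mathcal{O}(t)=0$. This is precisely the nontrivial point, and it is asserted rather than proved. The complement $C$ is spanned by the $\omega_0$-orthonormal vectors $\bm{v}_p$, and the pairing that detects $\mathcal{O}(t)=\sum_p c_p(t)\bm{v}_p$ exactly is $\lla\cdot,\bm{v}_p\rra_{\omega_0}$; the Futaki-type invariant instead pairs $\mathcal{O}(t)$ against the potentials $H_i(t)$ of the \emph{deformed} vector fields $V_p(t)=J(t)\xi_p+\sqrt{-1}\xi_p$, taken with respect to $\omega_i(t)$ and $\dbar_t$ and weighted by $\omega_i(t)^n$. (Similarly, the constraint $\int_X(1-e^{f_i})\omega_i(t)^n=0$ kills the constant component of $\mathcal{O}(t)$ only up to an error of order $\e_0\|c(t)\|_{\mathrm{Euc}}$, because the splitting $C=\mathbb{R}^N\oplus\mathcal{H}_{\frak{z'}}$ is orthogonal for $\omega_0^n$, not for $\omega_i(t)^n$.) So a priori $\mathcal{G}(t)$ could vanish while $\mathcal{O}(t)\neq 0$.

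What closes this in the paper is the quantitative estimate $\|1-e^{f_i(t)}-H_i(t)\|_{L^2(\omega_0)}\le C\e_0\|c(t)\|_{\mathrm{Euc}}$, which combined with the argument of Theorem \ref{part1} gives $\|c(t)\|_{\mathrm{Euc}}^2\le C\mathcal{G}(t)+C\e_0\|c(t)\|_{\mathrm{Euc}}^2$; hence $c(t)=0$ once $\mathcal{G}(t)=0$ and $\e_0$ is small, i.e.\ $\mathcal{G}$ is a nearly positive-definite quadratic form in the obstruction itself. In the variable-$J$ setting this estimate is genuinely harder than in Section \ref{deformcKE}: one must compare $V_p(t)$ with $V_p(0)$ and $\dbar_t$ with $\dbar_0$ acting on $H_i(t)$, using $\|J(t)-J\|_{C^1(X,\omega_0)}\le C\e_0$ together with a $C^2$ bound on $H_i(t)$, and this comparison is the one part of the proof that does not carry over verbatim from the fixed-complex-structure case; the paper devotes a separate lemma to it. Your proposal flags the ``moving target'' as a difficulty but never carries out this estimate, so the implication from $\mathcal{G}(t)=0$ to the existence of a coupled K\"ahler-Einstein metric is not established. (Your claim that $\mathcal{O}(t)=0$ \emph{if and only if} $\mathrm{Fut}_c$ vanishes on $\frak{z'}$ is also stronger than what is true or needed; only the quantitative one-directional statement for small $t$ is available.)
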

The condition \eqref{nondeg} is an analogue of a condition introduced by Li \cite{HLi17} in the context of complex deformation theory for constant scalar curvature K\"ahler metrics.
According to \cite{HLi17}, Li's original condition coincides with the non-degeneracy condition for the relative Futaki invariant introduced by Rollin-Simanca-Tipler \cite{RST13}. 

It is able to prove a corresponding result as Corollary \ref{Cor} and an asymptotic expansion for $\mathcal{G}$ as in Theorem \ref{main thm} in the complex deformation setting.
Since these results will not be used in this paper, we however omit these proof.

This paper is organized as follows;
In Section \ref{deformcKE}, an operator to deform a coupled K\"ahler-Einstein metric by the implicit function theorem is introduced,
and the first part of Theorem \ref{main thm} and Corollary \ref{Cor} are proved.
In Section \ref{expansion}, an asymptotic expansion of the function $\mathcal{F}$ in Theorem \ref{main thm} is calculated to complete the proof of Theorem \ref{main thm}.
In Section \ref{cpxdeform}, the technique used in Section \ref{deformcKE} is applied to prove Theorem \ref{Thm2}.
\subsection*{Acknowledgment}
The author was partly supported by Grant-in-Aid for JSPS Fellowships for Young Scientists No.17J02783 and No.19J01482.

%%%%%%%%%%%%%%%%%%%%%%%%%%%%%%%%%%%%
\section{Deformation for coupled K\"ahler-Einstein metrics}\label{deformcKE}
In this section we prove Theorem \ref{main thm}.
Let $X$ be an $n$-dimensional Fano manifold admitting a coupled K\"ahler-Einstein metric $\ctheta$ for a decomposition $(\alpha_i)_{i=1}^N$, 
and $G$ be the identity component of the isometry group ${\rm Isom}_0(X, \theta_1)$.
Then ${\rm Isom}_0(X, \theta_1)=\cdots ={\rm Isom}_0(X, \theta_N)$ by \cite[Lemma 2.2]{Tak19}.
%Recall $$U=\Set{\eta=(\eta_1, \dots, \eta_N) | \eta_i \text{ is a $\mathbb{R}$-valued $\theta_i$-harmonic $(1,1)$-form s.t. $\sum_{i=1}^N [\eta_i]=0$}}.$$
Fix a K\"ahler metric $\omega_0$ such that $\mathrm{Ric}(\omega_0)=\sum_{i=1}^N\theta_i$.
The normalized volume form $\omega_0^n / \int_X\omega_0^n$ is equal to the others $\theta_1^n / \int_X\theta_1^n=\cdots = \theta_N^n / \int_X \theta_N^n$, 
which comes from the definition of the coupled K\"ahler-Einstein metric. 
For any $\eta=(\eta_1, \dots, \eta_N)\in U_0$,
note that every $\eta_i$ is automatically $G$-invariant.
Let $W_G^{l+2,2}(X)$ be the subspace of real $(l+2)$-th sobolev space $W^{l+2,2}(X)$, whose elements are $G$-invariant.
Note $W_G^{l+2,2}(X) \subset C^m(X;\mathbb{R})$ if $l+2>n+m$ by the sobolev embbeding theorem.
We can take a neighborhood $\mathcal{U}_{l+2}\subset (W_{G}^{l+2,2}(X))^N$ at the origin to assume that
there exists $\e>0$ such that 
$\omega_i(t, \phi_i) := \theta_i+ t\eta_i + \del\dbar\phi_i$ defines a K\"ahler metric 
for any $t\in[0,\e)$, any $\eta\in U_0$, any $(\phi_i)_{i=1}^N\in\mathcal{U}_{l+2}$ and each $i=1,2,\dots,N$.
%For small $t\in [0, \e)$, we define 
%\begin{equation}
%\mathcal{M}(t, \eta, l+2)=\prod_{i=1}^N\Set{\phi_i\in W_G^{l+2,2}(X) | \omega_i(t, \phi_i):=\theta_i+t\eta_i+\deldel\phi_i>0}
%\end{equation}
%as the space of $G$-invariant K\"ahler potentials for the K\"ahler metrics $(\theta_i+t\eta_i)_{i=1}^N$, 
%and take a neighborhood $\mathcal{U}_{l+2}\subset(W_G^{l+2,2}(X))^N$ of the origin.
%It is able to suppose $\mathcal{U}_{l+2} \subset \mathcal{M}(t, \eta, l+2)$ for any $t\in [0,\e)$ and for any $\eta \in U_0$.
For $\Phi=(\phi_i)_{i=1}^N\in\mathcal{U}_{l+2}$, we denote $(f_i(t, \Phi))_{i=1}^N$ as the Ricci potential for $(\omega_i(t, \phi_i))_{i=1}^N$.

In order to construct a coupled K\"ahler-Einstein metric for the decomposition $(\alpha_i+t[\eta_i])_{i=1}^N$, we consider the following operator 
$\mathbb{F}=(F_1, F_2, \dots , F_{2N}) : [0, \e)\times\mathcal{U}_{l+2}\to (W^{l,2}_G)^N \times\mathbb{R}^{N}$;
%In order to deform the coupled K\"ahler-Einstein metric $(\theta_i)_{i=1}^N$ we consider the following operator 
%$\mathbb{F}=(F_1, F_2, \dots , F_{2N}) : (-\e, \e)\times\mathcal{U}_l\to (W^{2,l-2}_G)^N \times\mathbb{R}^{N}$;

\begin{eqnarray}\label{operator}
F_k(t,\Phi)=
  \begin{cases}
    1- e^{f_i(t, \Phi)} \quad &(k=1,2, \dots ,N)  \\
     \log \frac{1}{\int_X\omega_0^n}\int_X e^{-\sum_{j=1}^N\phi_j}\omega_0^n \quad &(k=N+1) \\
     \int_X\phi_k\omega_0^n \quad &(k=N+2,N+3, \dots ,2N). \\
  \end{cases}
\end{eqnarray}
Note that $(\omega_i(t, \phi_i))_{i=1}^N$ defines a coupled K\"ahler-Einstein metric if and only if $\mathbb{F}(t, (\phi_i+c_i)_{i=1}^N)=0$ for some constants $c_i\in\mathbb{R}$, 
and note also $\mathbb{F}(0,0)=0$.

\begin{remark}
Hultgren-WittNystr\"om \cite{HN18} introduced another operator  to prove  Theorem \ref{HN}. See \cite{HN18} for more detail.
However it is technically natural to use our operator $\mathbb{F}$  from view points of the Futaki type invariant.
The operator $\mathbb{F}$ was inspired by a generalization for K\"ahler-Einstein metrics introduced by Mabuchi \cite{Ma01}
which is called the generalized K\"ahler-Einstein metric or the Mabuchi soliton in the literature. 
\end{remark}

Consider the equation $\delta_{\Phi}\mathbb{F}(0,0)=0$ for a variation $(\delta\phi_1, \dots , \delta\phi_N)\in T_{(0,0)}(\{0\}\times\mathcal{U}_{l+2})$ 
to apply the implicit function theorem, where $\delta_{\Phi}\mathbb{F}(0,0)$ stands for the derivative along $\Phi$-direction at $(t,\Phi)=(0,0)$.
%It is easy to see that $\delta_{\Phi}\mathbb{F}(0,0)=0$ 
This is equivalent to the following equations;
\begin{equation}\label{delF}
 \Delta_{\theta_i}\delta\phi_i+\sum_{j=1}^N\delta\phi_j=0 \quad\text{and}\quad \int_X\delta\phi_i\omega_0^n=0 \quad\text{for}\quad i=1,2,\dots,N,
\end{equation}
where $\Delta_{\theta_i}$ is the negative Laplacian for $\theta_i$.
To see this, we prove the following;
\begin{lemma}\label{variationoff}
The variation of the Ricci potential $f_i(t,\Phi)$ along $(\delta\phi_1, \dots , \delta\phi_N)\in T_0\mathcal{U}_{l+2}$ at $(t,\Phi)=(0,0)$ is
$$\delta_{\Phi}f_i= -\Delta_{\theta_i}\delta\phi_i -\sum_{j=1}^N\delta\phi_j + \int_X\sum_{\j=1}^N\delta\phi_j\frac{\omega_0^n}{\int_X\omega_0^n}.$$
\end{lemma}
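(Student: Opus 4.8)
The plan is to compute $\delta_{\Phi}f_i$ directly from the defining equations \eqref{Ricci} for the Ricci potential, linearized at $(t,\Phi)=(0,0)$. Since $(\theta_i)_{i=1}^N$ is a coupled K\"ahler-Einstein metric, all Ricci potentials $f_i(0,0)$ vanish, so we are expanding around the trivial solution. First I would recall that for a family of K\"ahler metrics $\omega_i(t,\phi_i)=\theta_i+t\eta_i+\del\dbar\phi_i$, the Ricci form transforms as $\mathrm{Ric}(\omega_i(t,\phi_i))=\mathrm{Ric}(\theta_i)-\deldel\log\bigl(\omega_i(t,\phi_i)^n/\theta_i^n\bigr)$, and that the infinitesimal variation of $\log(\omega_i^n/\theta_i^n)$ along $\delta\phi_i$ at the base point is $\mathrm{tr}_{\theta_i}(\del\dbar\,\delta\phi_i)=\Delta_{\theta_i}\delta\phi_i$, where $\Delta_{\theta_i}$ is the negative Laplacian.

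Next I would differentiate the first equation in \eqref{Ricci}, namely $\mathrm{Ric}(\omega_i(t,\phi_i))-\sum_{j=1}^N\omega_j(t,\phi_j)=\deldel f_i(t,\Phi)$. At $(0,0)$ the $\partial\dbar$-exact parts satisfy
\begin{equation*}
\deldel\bigl(\delta_{\Phi}f_i\bigr)=-\deldel\bigl(\Delta_{\theta_i}\delta\phi_i\bigr)-\sum_{j=1}^N\deldel\,\delta\phi_j,
\end{equation*}
since the $\eta_i$ terms drop out in the $\Phi$-direction. By the $\partial\dbar$-lemma this determines $\delta_{\Phi}f_i$ up to an additive constant:
\begin{equation*}
\delta_{\Phi}f_i=-\Delta_{\theta_i}\delta\phi_i-\sum_{j=1}^N\delta\phi_j+c_i
\end{equation*}
for some $c_i\in\mathbb{R}$. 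To pin down $c_i$ I would use the normalization $\int_X(1-e^{f_i(t,\Phi)})\omega_i(t,\phi_i)^n=0$. Differentiating this at $(0,0)$, where $f_i=0$ so $1-e^{f_i}=0$, the variation of the factor $(1-e^{f_i})$ pairs against $\theta_i^n$ while the variation of the volume form pairs against the vanishing factor; hence $\int_X(\delta_{\Phi}f_i)\,\theta_i^n=0$. Integrating the expression for $\delta_{\Phi}f_i$ against $\theta_i^n$, the Laplacian term integrates to zero and $\int_X\delta\phi_i\,\theta_i^n=0$ would be assumed for $\delta\phi_i\in T_0\mathcal{U}_{l+2}$ (this is the content of the constraints $F_{N+2},\dots,F_{2N}$ cut out; more carefully, one uses that the tangent space to the zero set of those constraints imposes $\int_X\delta\phi_j\,\omega_0^n=0$, and $\omega_0^n/\int_X\omega_0^n=\theta_j^n/\int_X\theta_j^n$). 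This gives $c_i=\int_X\sum_{j=1}^N\delta\phi_j\,\frac{\omega_0^n}{\int_X\omega_0^n}$, matching the claimed formula once we substitute the common normalized volume form.

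The only genuinely delicate point is bookkeeping: one must be careful about which volume form appears, using the identity $\theta_1^n/\int_X\theta_1^n=\cdots=\theta_N^n/\int_X\theta_N^n=\omega_0^n/\int_X\omega_0^n$ noted just before the lemma, so that the constant $c_i$ can be written uniformly with $\omega_0^n$ as in the statement. A secondary subtlety is justifying that the variation of $1-e^{f_i}$ in the normalization integral contributes only through $\delta_{\Phi}f_i$ and not through the volume form — this is immediate because $f_i(0,0)=0$ makes the prefactor $1-e^{f_i}$ vanish identically at the base point, so by the Leibniz rule only the term differentiating the prefactor survives. I do not expect any analytic obstacle here; the whole computation is a first-order Taylor expansion of \eqref{Ricci} combined with the $\partial\dbar$-lemma and one integration by parts.
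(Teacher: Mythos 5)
Your proposal is correct and follows essentially the same route as the paper: differentiate the first equation in \eqref{Ricci} to get $\delta_{\Phi}f_i=-\Delta_{\theta_i}\delta\phi_i-\sum_j\delta\phi_j+c_i$, then fix $c_i$ by differentiating the normalization, which (since $f_i(0,0)=0$ kills the volume-form term) gives $\int_X(\delta_{\Phi}f_i)\,\theta_i^n=0$ and hence $c_i=\int_X\sum_j\delta\phi_j\,\theta_i^n/\int_X\theta_i^n=\int_X\sum_j\delta\phi_j\,\omega_0^n/\int_X\omega_0^n$. One small remark: your parenthetical invoking $\int_X\delta\phi_i\,\theta_i^n=0$ is unnecessary (and not assumed at the level of this lemma, whose tangent space is all of $(W_G^{l+2,2}(X))^N$); the identity $\int_X(\delta_{\Phi}f_i)\,\theta_i^n=0$ alone already yields the stated constant for arbitrary variations.
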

\begin{proof}
The derivation of the first equation in \eqref{Ricci} shows
$\delta_{\Phi}f_i= -\Delta_{\theta_i}\delta\phi_i -\sum_{j=1}^N\delta\phi_j+C$ for some constant $C$.
The constant $C$ is equal to $\int_X\sum_{\j=1}^N\delta\phi_j\frac{\theta_i^n}{\int_X\theta_i^n}
=\int_X\sum_{\j=1}^N\delta\phi_j\frac{\omega_0^n}{\int_X\omega_0^n}$ by the derivation of the second equation in \eqref{Ricci}.
\end{proof}
If the above equation \eqref{delF} has only the trivial solution, the implicit function theorem can be applied.
However it has nontrivial solutions in general by the following result.
Recall an operator $\mathbb{L} : (W^{l+2,2}(X))^N\to (W^{l,2}(X))^N$ defined by.
\begin{eqnarray*}
    \mathbb{L}(u_1,\dots,u_N)=
    \left(
    \begin{array}{c}
      \Delta_{\theta_1}u_1+\sum_{j=1}^N u_j-\int_X\sum_{j=1}^N u_j \frac{\omega_0^n}{\int_X\omega_0^n}  \\
      \vdots \\
      \Delta_{\theta_N}u_N+\sum_{j=1}^N u_j-\int_X\sum_{j=1}^N u_j \frac{\omega_0^n}{\int_X\omega_0^n} \\
    \end{array}
    \right).
\end{eqnarray*}
\begin{lemma}{\rm (A specific situation in \cite[Proposition 2.4]{Tak19-2})}
The kernel $\Ker\mathbb{L}$ is equal to
\begin{equation*}
\Set{(u_1,\dots,u_N)\in (C^{\infty}(X;\mathbb{R}))^N | \mathrm{grad}_{\theta_1}u_1=\cdots =\mathrm{grad}_{\theta_N}u_N=:V \text{ and }\text{ $V$ is holomorphic}},
\end{equation*}
where $\mathrm{grad}_{\theta_i}u_i$ is a type $(1,0)$ gradient vector field on $X$ defined by $i_{(\mathrm{grad}_{\theta_i}u_i)}\theta_i=\sqrt{-1}\bar{\partial}u_i$.
\end{lemma}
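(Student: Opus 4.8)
The plan is to identify $\Ker\mathbb{L}$ with the space of holomorphic vector fields arising from simultaneous gradient potentials. First I would observe that $\mathbb{L}(u_1,\dots,u_N)=0$ means each component $\Delta_{\theta_i}u_i+\sum_j u_j = c$ for the \emph{same} constant $c=\int_X\sum_j u_j\,\omega_0^n/\int_X\omega_0^n$ (independent of $i$). Subtracting the $i$th and $1$st equations gives $\Delta_{\theta_i}u_i=\Delta_{\theta_1}u_1$ for all $i$, but this alone is not enough; I would instead think of the whole system. The key is that the operator $\mathbb{L}$ is (up to the projection onto mean-zero-type functions) the linearization of the coupled Monge–Ampère system at a coupled K\"ahler–Einstein metric, and on a Fano manifold the Bochner–Kodaira/Matsushima-type identity forces $u_i$ in the kernel to be potentials of holomorphic vector fields. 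Concretely, I would fix $V_i:=\mathrm{grad}_{\theta_i}u_i$ and aim to show (a) each $V_i$ is holomorphic and (b) $V_1=\dots=V_N$.

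For step (a), the standard computation is this: writing the Ricci-potential relation $\mathrm{Ric}(\theta_i)=\sum_j\theta_j$ (since $\ctheta$ is coupled K\"ahler–Einstein, all Ricci potentials vanish), one has the Bochner-type formula on a Fano manifold that for a real function $u$, $\bar\partial\,\mathrm{grad}_{\theta_i}u=0$ iff $\Delta_{\theta_i}u+u$ lies in the appropriate eigenspace; more precisely, using the Weitzenb\"ock identity for the $\bar\partial$-operator acting on $(1,0)$-vector fields one gets
\[
\int_X |\bar\partial\,\mathrm{grad}_{\theta_i}u_i|^2\,\theta_i^n \;=\; \int_X \bigl(\Delta_{\theta_i}u_i + R_i(u_i)\bigr)\,\overline{(\cdots)}\,\theta_i^n,
\]
and feeding in $\mathrm{Ric}(\theta_i)=\sum_j\theta_j$ together with the kernel equation $\Delta_{\theta_i}u_i = c - \sum_j u_j$ makes the right-hand side collapse. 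I would sum over $i$ with weights $1/\int_X\theta_i^n$ so that the cross terms $\int_X u_i u_j$ (which are symmetric in $i,j$ after using the common normalized volume form $\theta_i^n/\int_X\theta_i^n=\omega_0^n/\int_X\omega_0^n$) combine into a perfect square, yielding $\sum_i \frac{1}{\int_X\theta_i^n}\int_X|\bar\partial\,\mathrm{grad}_{\theta_i}u_i|^2\theta_i^n \le 0$, hence each $V_i$ holomorphic. This is exactly the mechanism behind \cite[Proposition 2.4]{Tak19-2}, so I would cite that computation rather than reproduce it in full.

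For step (b), once every $V_i$ is holomorphic I would use that holomorphic vector fields are determined by their potentials modulo constants and that the equations tie the $u_i$ together: $\Delta_{\theta_i}u_i+\sum_j u_j$ is the same constant for all $i$. The cleanest route is to show $V_i - V_j$ is holomorphic with zero "norm": from the kernel equations $\Delta_{\theta_i}u_i - \Delta_{\theta_j}u_j = 0$, and since both $V_i,V_j$ are holomorphic gradient fields one can integrate against $u_i-u_j$ (after matching normalizations via the common volume form) to conclude $\mathrm{grad}_{\theta_i}u_i = \mathrm{grad}_{\theta_j}u_j$; the point is that $\Delta_{\theta_i}u_i$ and $\Delta_{\theta_j}u_j$ having equal... actually the sharper input is the Matsushima-type rigidity that all the $\theta_i$ have the same isometry algebra, so a holomorphic vector field that is $\theta_1$-Hamiltonian is simultaneously $\theta_i$-Hamiltonian for all $i$, and its normalized potentials agree. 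Conversely, given any holomorphic $V$ with $\mathrm{grad}_{\theta_i}u_i=V$ for all $i$ and $\int_X u_i\,\theta_i^n=0$, one checks directly that $(u_1,\dots,u_N)\in\Ker\mathbb{L}$ by running the Bochner identity backwards. The main obstacle is organizing the weighted summation over $i$ so that the off-diagonal terms $\int_X u_i u_j$ assemble into $\bigl|\sum_j u_j\bigr|^2$-type nonnegative quantities — this is where the defining property of the coupled system (common normalized volume, $\mathrm{Ric}(\theta_i)=\sum_j\theta_j$) is essential, and where a naive term-by-term estimate would fail. I would also need to note the elliptic regularity step promoting $W^{l+2,2}$-solutions to $C^\infty$, which is immediate since $\mathbb{L}$ is elliptic (a diagonal Laplacian plus a bounded zeroth-order coupling).
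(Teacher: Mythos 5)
The paper does not actually prove this lemma: it is quoted verbatim as ``a specific situation in [Tak19-2, Proposition~2.4]'', so your decision to cite Takahashi's computation rather than reproduce it is exactly what the paper does, and to that extent your proposal matches. However, the sketch you wrap around the citation would not stand on its own, and two of its steps contain genuine gaps.

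First, in step (a) the ``cross terms'' are misidentified. The Ricci term in the Bochner identity for $\bar\partial\,\mathrm{grad}_{\theta_i}u_i$ is $\int_X \mathrm{Ric}(\theta_i)(\mathrm{grad}_{\theta_i}u_i,\overline{\mathrm{grad}_{\theta_i}u_i})\,\theta_i^n=\sum_{l}\int_X\theta_l(\mathrm{grad}_{\theta_i}u_i,\overline{\mathrm{grad}_{\theta_i}u_i})\,\theta_i^n$, a pointwise mixed-metric quantity, not a collection of terms $\int_X u_iu_j$. If one keeps only the diagonal term $l=i$ (which is the naive estimate, since the off-diagonal terms are nonnegative) and uses the kernel equation $\Delta_{\theta_i}u_i=-\bigl(\sum_j u_j-c\bigr)=:-w$, the weighted sum comes out as $\sum_i\|\bar\partial\,\mathrm{grad}_{\theta_i}u_i\|^2\le (N-1)\int_X w^2\,\omega_0^n/\int_X\omega_0^n$, which has the \emph{wrong sign} to conclude holomorphy. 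The actual argument must exploit the off-diagonal terms $\theta_l(\mathrm{grad}_{\theta_i}u_i,\overline{\mathrm{grad}_{\theta_i}u_i})$, $l\ne i$, together with the metric-independence identity $\theta_l(\xi,\overline{\mathrm{grad}_{\theta_l}u_l})=\xi(u_l)$, and symmetrize in $(i,l)$ so that what appears is $-\sum_{i\ne l}\int_X\theta_l(\xi_i,\overline{\xi_i-\xi_l})$ with $\xi_i=\mathrm{grad}_{\theta_i}u_i$; it is the vanishing of these combinations that simultaneously forces each $\xi_i$ to be holomorphic \emph{and} $\xi_1=\cdots=\xi_N$. Your step (b) is therefore also problematic: the attempt to integrate $\Delta_{\theta_i}u_i-\Delta_{\theta_j}u_j=0$ against $u_i-u_j$ fails because the two Laplacians are taken with respect to different metrics, so integration by parts does not produce $\|\mathrm{grad}_{\theta_i}u_i-\mathrm{grad}_{\theta_j}u_j\|^2$; and the appeal to Matsushima-type rigidity (equality of the isometry groups of the $\theta_i$) is a statement about Killing fields that is itself downstream of this kind of kernel computation, so invoking it here risks circularity and in any case does not yield equality of the $V_i$.

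Second, the easy inclusion should not be done ``by running the Bochner identity backwards'': an integrated identity cannot recover the pointwise equation $\Delta_{\theta_i}u_i+\sum_ju_j=\mathrm{const}$. The direct route is pointwise: for $V$ holomorphic with $i_V\theta_i=\sqrt{-1}\bar\partial u_i$ one has $\sqrt{-1}\bar\partial(\Delta_{\theta_i}u_i)=\sqrt{-1}\bar\partial\,\mathrm{div}_{\theta_i^n}V=-i_V\mathrm{Ric}(\theta_i)=-i_V\sum_j\theta_j=-\sum_j\sqrt{-1}\bar\partial u_j$, whence $\Delta_{\theta_i}u_i+\sum_ju_j$ is constant, and the constant is the stated mean by integration against $\omega_0^n$. (Your elliptic-regularity remark for smoothness of kernel elements is correct and is the only part of the sketch that is complete as written.)
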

%In particular a holomorphic vector field on $X$ corresponds to an element of the Kernel $\Ker\mathbb{L}$.
Therefore we modify the operator $\mathbb{F}$ to apply the implicit function theorem.
Let $\frak{g}$ be the Lie algebra of $G$.
Since $X$ is Fano, $\frak{g}$ is nothing but the ideal of killing vector fileds with zeros.
%Any element $\xi\in\frak{g}$ corresponds to the holomorphic vector field $J\xi+\sqrt{-1}\xi$.
Let $\frak{z}$ be the center of $\frak{g}$.
For any $G$-invariant K\"ahler metric $\omega$ and for any $\xi\in\frak{z}$, 
the holomorphic vector field $V=J\xi+\sqrt{-1}\xi$ defines a smooth real-valued G-invariant function $u$ satisfying 
$$i_V\omega=\sqrt{-1}\dbar u \quad\text{and}\quad \int_Xu\omega^n=0,$$
where $J$ is a fixed complex structure of $X$.
The function $u$ is called {\it the holomorphic potential} of $V$ with respect to $\omega$.
For the holomorphic potentials $u_i$ of $V$ with respect to $\theta_i$, 
we call $\bm{u}=(u_1, \dots, u_N)$ {\it the holomorphic potential vector} of $V$ 
with respect to the coupled K\"ahler-Einstein metric $(\theta_i)_{i=1}^N$.
Let $\mathcal{H}_{\frak{z}}$ be the space of holomorphic potential vectors corresponding to elements in $\frak{z}$
with respect to $(\theta_i)_{i=1}^N$,
endowed with the induced $L^2$-inner product 
$\lla \bm{u},\bm{v} \rra_{\omega_0}=\int_X \langle\bm{u}, \bm{v}\rangle\omega_0^n/\int_X\omega_0^n$ from $(W_G^{l+2,2}(X))^N$
where $\langle\bm{u}, \bm{v}\rangle$ is the pointwise inner product.
Note that $\mathbb{R}^N\oplus\mathcal{H}_{\frak{z}}$ is nothing but the space of $G$-invariant kernels $\Ker\mathbb{L}\cap (W_G^{l+2,2}(X))^N$.
%$\mathbb{L}: (W_G^{l+2,2}(X))^N \to (W_G^{l,2}(X))^N$.
Note also that the operator $\mathbb{L}$ is self-adjoint with respect to $\lla \cdot,\cdot \rra_{\omega_0}$ (See \cite[Proposition 2.4]{Tak19-2} for more details).
%We fix an orthonormal basis $\bm{u}_1, \dots, \bm{u}_d$ of $\mathbb{R}^N\oplus\mathcal{H}_{\omega_0}$ 
%with respect to $\lla \cdot,\cdot \rra_{\omega_0}$.
%Define a natural inner product in the sobolev space $(W_G^{l+2,2})^N$ as
%$\lla \bm{u},\bm{v} \rra_{\omega_0}=\int_X \langle\bm{u}, \bm{v}\rangle\omega_0^n/\int_X\omega_0^n$,
%where $\langle\bm{u}, \bm{v}\rangle$ is the pointwise inner product.
%Note that the operator $\mathbb{L}$ is self-adjoint with respect to $\lla \cdot,\cdot \rra_{\omega_0}$ (See \cite[Proposition 2.3]{Tak19-2} for more details).
By the inner product $\lla \cdot,\cdot \rra_{\omega_0}$, 
the sobolev space $(W_G^{l+2,2}(X))^N$ is decomposed as $\mathbb{R}^N\oplus\mathcal{H}_{\frak{z}}\oplus\mathcal{H}_{\frak{z},l+2}^{\perp}$, 
where $\mathcal{H}_{\frak{z},l+2}^{\perp}$ is the orthogonal complement.
%Fix an orthonormal basis $\{\bm{v}_1, \dots , \bm{v}_d\}$ of $\mathcal{H}_G$ with respect to the inner product $\lla \cdot,\cdot \rra_{\omega_0}$.
We fix an orthonormal basis $\bm{v}_1, \dots, \bm{v}_d$ of $\mathbb{R}^N\oplus\mathcal{H}_{\frak{z}}$ 
with respect to $\lla \cdot,\cdot \rra_{\omega_0}$.
Let $\pi^{\perp}_{\frak{z}} : (W_G^{l+2,2}(X))^N\to\mathcal{H}_{\frak{z}, l+2}^{\perp}$ be the orthogonal projection.
%that is, for $\bm{u}\in (W_G^{l+2,2}(X))^N$ we define
%$$\pi^{\perp}_{\frak{z}}(\bm{u})=\bm{u}-\sum_{p=1}^d\lla \bm{u},\bm{v}_p \rra_{\omega_0}\bm{v}_p.$$

In these setting, we define a modified operator 
$\tilde{\mathbb{F}}=(\tilde{F_1},\dots,\tilde{F}_{2N}) 
: [0,\e )\times (\mathcal{U}_{l+2}\cap\mathcal{H}_{\frak{z},l+2}^{\perp})\to\mathcal{H}_{\frak{z}, l}^{\perp}\times\mathbb{R}^N$ 
as follows; 
\begin{eqnarray}\label{modifiedop}
  \begin{cases}
   (\tilde{F}_1,\dots, \tilde{F}_N)&= \pi_{\frak{z}}^{\perp}(F_1,\dots, F_N) \\
   \tilde{F}_{N+i}&= F_{N+i} \quad (i=1,2, \dots,N).  \\
  \end{cases}
\end{eqnarray}
Then the equation $\delta_{\Phi}\tilde{\mathbb{F}}(0,0)=0$ for a variation 
$(\delta\phi_1, \dots , \delta\phi_N)\in T_{(0,0)}(\{0\}\times(\mathcal{U}_{l+2}\cap\mathcal{H}_{\frak{z},l+2}^{\perp}))$ is equivalent to
\begin{eqnarray}
  \begin{cases}
    \pi_{\frak{z}}^{\perp} \circ \mathbb{L}(\delta\phi_1, \dots , \delta\phi_N) =0  \\
    \int_X\delta\phi_i\omega_0^n=0 \quad\text{for}\quad i=1,2,\dots,N.  \\
   \end{cases}
\end{eqnarray}
This equation has the only solution $(\delta\phi_1,\dots,\delta\phi_N)=(0,\dots,0)$,
%by definition of the projection $\pi^{\perp}_{\frak{z}}$.
since $(\delta\phi_1,\dots,\delta\phi_N) \in \mathcal{H}_{l+2, \frak{z}}^{\perp}$ and $\mathbb{L}(\delta\phi_1,\dots,\delta\phi_N) \in \mathcal{H}_{l, \frak{z}}^{\perp}$.
Therefore, by the implicit function theorem, for small $t>0$ there exists
$(\phi_i(t,\eta))_{i=1}^N \in \mathcal{U}_{l+2} \cap \mathcal{H}_{\frak{z},l+2}^{\perp}$ 
such that $\tilde{\mathbb{F}}(t, (\phi_i(t,\eta))_{i=1}^N)=0$.
More precisely, we have 

\begin{lemma}\label{IFT}
There exists $\e_0>0$ such that for any $t\in [0,\e_0)$ and for any $\eta\in U_0$, we have 
a pair of K\"ahler potentials $(\phi_i(t,\eta))_{i=1}^N \in \mathcal{U}_{l+2} \cap \mathcal{H}_{\frak{z},l+2}^{\perp}$
and functions $c(t,\eta) := (c_p(t,\eta))_{p=1}^d : [0,\e_0)\times U_0 \to \mathbb{R}^d$
satisfying
 \begin{equation}\label{GKE}
\Bigl(1-e^{f_1(t,\eta)},\dots, 1-e^{f_N(t,\eta)} \Bigr)=\sum_{p=1}^d c_p(t,\eta)\bm{v}_p,
\end{equation}
where $(f_i(t,\eta))_{i=1}^N$ denotes the Ricci potential for the K\"ahler metrics $(\theta_i+t\eta_i +\deldel\phi_i)_{i=1}^N$.
Moreover there exists $C>0$ such that for any $t\in [0,\e_0)$, any $\eta\in U_0$ and each $i=1,2,\dots,N$,
$\| \phi_i(t,\eta) \|_{W_G^{l+2,2}} \leq C\e_0$
and $\| c(t,\eta) \|_{\rm Euc} := \{\sum_{p=1}^d c_p(t,\eta)^2\}^{1/2} \leq C\e_0$.
\end{lemma}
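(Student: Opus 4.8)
The plan is to derive Lemma \ref{IFT} as a direct application of the implicit function theorem in Banach spaces to the modified operator $\tilde{\mathbb{F}}$, uniformly in the parameter $\eta \in U_0$. First I would verify that $\tilde{\mathbb{F}}$ is a well-defined smooth map between the Banach spaces $[0,\e)\times (\mathcal{U}_{l+2}\cap\mathcal{H}_{\frak{z},l+2}^{\perp})$ and $\mathcal{H}_{\frak{z},l}^{\perp}\times\mathbb{R}^N$: smoothness of $\Phi \mapsto f_i(t,\Phi)$ follows from the fact that the Ricci potential is defined through the Monge--Amp\`ere expression $\omega_i(t,\phi_i)^n$ and the logarithm of a positive function (which is smooth on the relevant open set in Sobolev space by the Sobolev embedding $W^{l+2,2}_G(X)\subset C^m$ for $l+2>n+m$), while the components $\tilde F_{N+i}$ are visibly smooth. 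One must also check that $\tilde{\mathbb{F}}$ actually lands in $\mathcal{H}_{\frak{z},l}^{\perp}\times\mathbb{R}^N$: the projection $\pi_{\frak z}^{\perp}$ forces the first $N$ components into the orthogonal complement, and the conditions $F_{N+i}(t,\Phi)=\int_X\phi_i\omega_0^n$ take values in $\mathbb{R}$; note here that the range dimension $d = \dim(\mathbb{R}^N\oplus\mathcal{H}_{\frak z})$ and the auxiliary $\mathbb{R}^N$ are being balanced against the normalization directions, so I would double-check the bookkeeping of which constants $c_p$ are recorded.

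Next I would compute the linearization $\delta_{\Phi}\tilde{\mathbb{F}}(0,0)$, which by Lemma \ref{variationoff} and the definition \eqref{modifiedop} is exactly the map $(\delta\phi_i)\mapsto \big(\pi_{\frak z}^{\perp}\circ\mathbb{L}(\delta\phi_1,\dots,\delta\phi_N),\ (\int_X\delta\phi_i\omega_0^n)_i\big)$, restricted to the domain $\mathcal{H}_{\frak{z},l+2}^{\perp}$. Since $\mathbb{L}$ is self-adjoint with respect to $\lla\cdot,\cdot\rra_{\omega_0}$ with kernel $\mathbb{R}^N\oplus\mathcal{H}_{\frak z}$, elliptic regularity shows $\mathbb{L}$ restricts to a topological isomorphism $\mathcal{H}_{\frak{z},l+2}^{\perp}\to\mathcal{H}_{\frak{z},l}^{\perp}$; the extra $\mathbb{R}^N$ normalization conditions $\int_X\delta\phi_i\omega_0^n=0$ are already incorporated in the domain $\mathcal{H}_{\frak z,l+2}^{\perp}$ (since constants lie in the kernel), so the full linearization at $(0,0)$ is an isomorphism onto $\mathcal{H}_{\frak z,l}^{\perp}\times\mathbb{R}^N$ — more precisely one checks injectivity (done in the excerpt: $(\delta\phi_i)\in\mathcal{H}_{l+2,\frak z}^{\perp}$ with $\mathbb{L}(\delta\phi_i)\in\mathcal{H}_{l,\frak z}^{\perp}$ and $\pi_{\frak z}^{\perp}\mathbb{L}(\delta\phi_i)=0$ forces $\mathbb{L}(\delta\phi_i)=0$, hence $(\delta\phi_i)=0$) and surjectivity from the Fredholm alternative for $\mathbb{L}$. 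Then the implicit function theorem produces, for each fixed $\eta$, a unique small solution $\Phi(t,\eta)=(\phi_i(t,\eta))_{i=1}^N \in\mathcal{U}_{l+2}\cap\mathcal{H}_{\frak z,l+2}^{\perp}$ of $\tilde{\mathbb{F}}(t,\Phi)=0$; expanding $\tilde F_k$ for $k\le N$, the equation $\pi_{\frak z}^{\perp}(1-e^{f_i(t,\eta)})=0$ means that $(1-e^{f_i(t,\eta)})_{i=1}^N$ lies in $\mathbb{R}^N\oplus\mathcal{H}_{\frak z}$, hence equals $\sum_{p=1}^d c_p(t,\eta)\bm v_p$ for uniquely determined coefficients $c_p(t,\eta)$, which gives \eqref{GKE}.

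The main obstacle is obtaining the estimates and the smooth (indeed jointly continuous/smooth) dependence on $\eta\in U_0$, not just for each fixed $\eta$. For this I would treat $\eta$ as an additional parameter: since $U_0$ is a compact subset of the finite-dimensional space $U$, the map $(t,\eta,\Phi)\mapsto\tilde{\mathbb{F}}(t,\eta,\Phi)$ is smooth jointly in all variables on $[0,\e)\times U_0\times(\mathcal{U}_{l+2}\cap\mathcal{H}_{\frak z,l+2}^{\perp})$, and the partial linearization in $\Phi$ at $(0,\eta,0)$ is independent of $\eta$ (it equals the isomorphism above, because the term $t\eta_i$ drops out at $t=0$); hence by the parametrized implicit function theorem there is a single $\e_0>0$, uniform over the compact set $U_0$, and a smooth solution map $(t,\eta)\mapsto(\Phi(t,\eta),c(t,\eta))$ on $[0,\e_0)\times U_0$ with $\Phi(0,\eta)=0$ and $c(0,\eta)=0$. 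Finally, the bounds $\|\phi_i(t,\eta)\|_{W_G^{l+2,2}}\le C\e_0$ and $\|c(t,\eta)\|_{\rm Euc}\le C\e_0$ follow by shrinking $\e_0$ and using the continuity (Lipschitz bound from the IFT) of the solution map together with the vanishing at $t=0$; compactness of $U_0$ makes the constant $C$ uniform. I would also note in passing that one may need to verify $\Phi(t,\eta)$ stays inside $\mathcal{U}_{l+2}$ so that $\theta_i+t\eta_i+\deldel\phi_i$ remains K\"ahler, which is automatic after shrinking $\e_0$ since $\|\Phi\|\to0$.
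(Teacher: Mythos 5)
Your proposal follows the same route as the paper: linearize the modified operator $\tilde{\mathbb{F}}$ at $(0,0)$, identify the linearization with $\pi_{\frak z}^{\perp}\circ\mathbb{L}$ plus the normalization conditions, kill the kernel by restricting to $\mathcal{H}_{\frak z,l+2}^{\perp}$, and invoke the implicit function theorem; your extra care about treating $\eta$ as a parameter over the compact set $U_0$ to get a single $\e_0$ and uniform constants, and about reading off \eqref{GKE} from $\pi_{\frak z}^{\perp}(1-e^{f_i})=0$, is exactly what the paper's terse ``more precisely, we have'' is sweeping under the rug, and is welcome.

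There is, however, one genuine problem in your surjectivity step, and you in fact write down the reason for it yourself. You observe that the conditions $\int_X\delta\phi_i\,\omega_0^n=0$ are ``already incorporated in the domain $\mathcal{H}_{\frak z,l+2}^{\perp}$'' (since $\mathcal{H}_{\frak z,l+2}^{\perp}$ is orthogonal to the constant vectors $\mathbb{R}^N$), and then conclude that the full linearization is an isomorphism onto $\mathcal{H}_{\frak z,l}^{\perp}\times\mathbb{R}^N$. These two statements are incompatible: if the last $N$ components of $\delta_{\Phi}\tilde{\mathbb{F}}(0,0)$ vanish identically on the domain, the image is contained in $\mathcal{H}_{\frak z,l}^{\perp}\times\{0\}$ and the map cannot be onto the stated target; the Fredholm alternative only gives you surjectivity of $\mathbb{L}:\mathcal{H}_{\frak z,l+2}^{\perp}\to\mathcal{H}_{\frak z,l}^{\perp}$, not of the $\mathbb{R}^N$ block. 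Worse, at the nonlinear level the equation $\tilde F_{N+1}(t,\Phi)=\log\frac{1}{\int_X\omega_0^n}\int_Xe^{-\sum_j\phi_j}\omega_0^n=0$ is \emph{not} automatic on $\mathcal{H}_{\frak z,l+2}^{\perp}$ (by Jensen it forces $\sum_j\phi_j\equiv 0$), so the system would be over-determined on that domain. The repair, which is implicit in the paper's remark that a coupled K\"ahler--Einstein metric corresponds to $\mathbb{F}(t,(\phi_i+c_i)_{i=1}^N)=0$ for \emph{some} constants $c_i$, is to keep the additive constants as free unknowns: take the domain to be (a neighborhood of $0$ in) $\mathbb{R}^N\oplus\mathcal{H}_{\frak z,l+2}^{\perp}$, i.e.\ quotient only by $\mathcal{H}_{\frak z}$. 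Then $\mathbb{L}$ kills the constant block, the first $N$ components of the linearization still give the isomorphism $\mathcal{H}_{\frak z,l+2}^{\perp}\to\mathcal{H}_{\frak z,l}^{\perp}$, and the last $N$ components restricted to $\mathbb{R}^N$ become the invertible linear map $(c_1,\dots,c_N)\mapsto(-\sum_jc_j,\,c_2\!\int_X\omega_0^n,\dots,c_N\!\int_X\omega_0^n)$ (up to the paper's index bookkeeping, which you rightly flagged), so the full linearization is an isomorphism onto $\mathcal{H}_{\frak z,l}^{\perp}\times\mathbb{R}^N$ and the IFT applies. With that correction the rest of your argument (parametrized IFT, Lipschitz bounds giving $\|\phi_i(t,\eta)\|\le C\e_0$ and $\|c(t,\eta)\|_{\rm Euc}\le C\e_0$ from vanishing at $t=0$) goes through as you describe.
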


Now we define the function $\mathcal{F}$ as in Theorem \ref{main thm}.
Let $V(t,\eta)$ be the holomorphic vector field on $X$ 
corresponding to $\sum_{p=1}^d c_p(t,\eta)\bm{v}_p\in \mathbb{R}^N\oplus\mathcal{H}_{\frak{z}}$ in Lemma \ref{IFT}, that is,
$V(t,\eta)=\mathrm{grad}_{\theta_1}(1-e^{f_1(t,\eta)}) = \cdots = \mathrm{grad}_{\theta_N}(1-e^{f_N(t,\eta)})$.
Let $H_i(t,\eta)$ be the holomorphic potential for $V(t,\eta)$ with respect to the K\"ahler metric $\omega_i(t,\eta):=\theta_i+t\eta_i +\deldel\phi_i(t,\eta)$,
that is,
%it is defined by 
\begin{equation}\label{holopote}
i_{V(t,\eta)}\omega_i(t,\eta)=\sqrt{-1}\bar{\partial}H_i(t,\eta) \quad\text{and}\quad \int_XH_i(t,\eta)\omega_i(t,\eta)^n=0
\end{equation}
for each $i$.
Then we introduce a function $\mathcal{F} : [0, \e_0)\times U_0\to\mathbb{R}$ as follows;
$$\mathcal{F}(t,\eta)=\int_X\sum_{i=1}^NH_i(t,\eta)(1-e^{f_i(t,\eta)})\frac{\omega_i(t,\eta)^n}{\int_X\omega_i(t,\eta)^n}.$$
This function is nothing but the Futaki type invariant $\mathrm{Fut}_c(V(t,\eta))$ for the holomorphic vector field $V(t,\eta)$ 
with respect to the decomposition $([\omega_i(t,\eta)])_{i=1}^N$.

Now we prove the first part of Theorem \ref{main thm}.
\begin{theorem}\label{part1}
There exists $\e_0>0$ such that if $\eta\in U_0$ satisfies $\mathcal{F}(t,\eta)=0$ for some $t\in[0,\e_0)$,
then there exists a coupled K\"ahler-Einstein metric for the decomposition $(\alpha_i+t[\eta_i])_{i=1}^N$.
\end{theorem}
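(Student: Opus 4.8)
The plan is to reduce Theorem \ref{part1} to the estimate
\[
\mathcal{F}(t,\eta)=\|c(t,\eta)\|_{\rm Euc}^{2}\bigl(1+o(1)\bigr)\qquad(t\to 0),
\]
with the $o(1)$ uniform in $\eta\in U_0$. Granting this, after shrinking $\e_0$ one has $\mathcal{F}(t,\eta)\ge\frac12\|c(t,\eta)\|_{\rm Euc}^{2}\ge 0$, so $\mathcal{F}(t,\eta)=0$ forces $c(t,\eta)=0$; then $(1-e^{f_1(t,\eta)},\dots,1-e^{f_N(t,\eta)})=\sum_{p=1}^{d}c_p(t,\eta)\bm{v}_p=0$ by Lemma \ref{IFT}, so every Ricci potential $f_i(t,\eta)$ vanishes, and hence $(\omega_i(t,\eta))_{i=1}^{N}=(\theta_i+t\eta_i+\deldel\phi_i(t,\eta))_{i=1}^{N}$ is a coupled K\"ahler-Einstein metric for the decomposition $(\alpha_i+t[\eta_i])_{i=1}^{N}$, as desired.

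To prove the estimate I would set $P_i:=1-e^{f_i(t,\eta)}$ and $V:=V(t,\eta)$, so that $(P_i)_i=\sum_p c_p(t,\eta)\bm{v}_p\in\mathbb{R}^{N}\oplus\mathcal{H}_{\frak{z}}$, $i_V\theta_i=\sqrt{-1}\dbar P_i$ for each $i$, and $\int_X P_i\,\omega_i(t,\eta)^{n}=0$ by \eqref{Ricci}. The same $V$ has holomorphic potential $H_i:=H_i(t,\eta)$ with respect to $\omega_i(t,\eta)=\theta_i+t\eta_i+\deldel\phi_i(t,\eta)$, so subtracting the two potential equations and using $i_V(\deldel\phi)=\sqrt{-1}\dbar(V\phi)$ (valid for holomorphic $V$) gives $\sqrt{-1}\dbar\bigl(H_i-P_i-V\phi_i(t,\eta)\bigr)=t\,i_V\eta_i$. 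The left side is visibly $\dbar$-exact, so standard Hodge-theoretic elliptic estimates on the compact K\"ahler manifold $X$ yield, for suitable constants $c_i'\in\mathbb{R}$, that $\|H_i-P_i-V\phi_i(t,\eta)-c_i'\|_{C^{1}}\le C\,t\,\|c(t,\eta)\|_{\rm Euc}$; here I use that $\mathbb{R}^{N}\oplus\mathcal{H}_{\frak{z}}$ and the span of the $\eta_i$ are finite dimensional, so $\|V\|_{C^{m}}\lesssim\|c(t,\eta)\|_{\rm Euc}$ and $\|\eta_i\|_{C^{m}}\lesssim 1$. Since $\phi_i(0,\eta)=0$ and $c(0,\eta)=0$, and the dependence on $t$ is smooth with $U_0$ compact, $\|\phi_i(t,\eta)\|_{C^{1}}$ and $\|c(t,\eta)\|_{\rm Euc}$ tend to $0$ uniformly in $\eta$; hence $\|V\phi_i(t,\eta)\|_{C^{0}}\lesssim\|c(t,\eta)\|_{\rm Euc}\|\phi_i(t,\eta)\|_{C^{1}}=o(\|c(t,\eta)\|_{\rm Euc})$, and therefore $H_i=P_i+c_i'+o(\|c(t,\eta)\|_{\rm Euc})$ in $C^{0}$, uniformly.

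Substituting this into $\mathcal{F}(t,\eta)=\sum_i\int_X H_i P_i\,\omega_i(t,\eta)^{n}/\int_X\omega_i(t,\eta)^{n}$, the constants $c_i'$ cancel exactly against $\int_X P_i\,\omega_i(t,\eta)^{n}=0$; the remaining cross terms are $o(\|c(t,\eta)\|_{\rm Euc}^{2})$ because $\|P_i\|_{L^{1}}\lesssim\|c(t,\eta)\|_{\rm Euc}$ (again finite dimensionality); and in $\sum_i\int_X P_i^{2}\,\omega_i(t,\eta)^{n}/\int_X\omega_i(t,\eta)^{n}$ one replaces the volume form by $\omega_0^{n}/\int_X\omega_0^{n}$ at the cost of another $o(\|c(t,\eta)\|_{\rm Euc}^{2})$, since $\omega_i(t,\eta)\to\theta_i$ uniformly and $\theta_i^{n}/\int_X\theta_i^{n}=\omega_0^{n}/\int_X\omega_0^{n}$. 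Finally $\sum_i\int_X P_i^{2}\,\omega_0^{n}/\int_X\omega_0^{n}=\|(P_i)_i\|_{\omega_0}^{2}=\|c(t,\eta)\|_{\rm Euc}^{2}$ by orthonormality of the $\bm{v}_p$, which is the claimed estimate.

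The main obstacle is the uniform error bookkeeping in this last step: each correction must be genuinely $o(\|c(t,\eta)\|_{\rm Euc}^{2})$, not merely $O(\|c(t,\eta)\|_{\rm Euc}^{2})$, and every $o(1)$ must be uniform over the compact set $U_0$. The two features that make this work are $\phi_i(0,\eta)=0$, which makes $\phi_i(t,\eta)$ itself small rather than just bounded, and the finite dimensionality of $\mathbb{R}^{N}\oplus\mathcal{H}_{\frak{z}}$, which promotes the $L^{2}$-size $\|c(t,\eta)\|_{\rm Euc}$ of $(P_i)_i$ into control of $V$, of the $P_i$ and of the $H_i$ in every $C^{m}$-norm.
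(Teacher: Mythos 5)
Your argument is correct and follows essentially the same route as the paper: both reduce the statement to showing $c(t,\eta)=0$, use the identity $\sqrt{-1}\dbar\bigl(H_i(t,\eta)-(1-e^{f_i(t,\eta)})\bigr)=i_{V(t,\eta)}(t\eta_i+\deldel\phi_i(t,\eta))$ together with elliptic estimates, the smallness of $\phi_i(t,\eta)$ from Lemma \ref{IFT}, the finite dimensionality of $\mathbb{R}^N\oplus\mathcal{H}_{\frak{z}}$, and the normalizations to show that $H_i$ agrees with $1-e^{f_i}$ up to an error of size $o(\|c(t,\eta)\|_{\rm Euc})$, and then conclude via orthonormality of the $\bm{v}_p$ that $\mathcal{F}(t,\eta)=0$ forces $\|c(t,\eta)\|_{\rm Euc}=0$. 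Your packaging of the conclusion as $\mathcal{F}(t,\eta)=\|c(t,\eta)\|_{\rm Euc}^2(1+o(1))$ and your separate treatment of the term $V\phi_i$ are only cosmetic variations on the paper's proof of Theorem \ref{part1}.
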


\begin{proof}
It suffice to show that the vector $c(t,\eta) = (c_p(t,\eta))_{p=1}^d$ in Lemma \ref{IFT} vanishes when $\mathcal{F}(t,\eta)=0$.
%Then the K\"ahler metrics $(\omega_i(t,\eta))_{i=1}^N=(\theta_i+t\eta_i +\deldel\phi_i)_{i=1}^N$ defines a coupled K\"ahler-Einstein metric for  
%$(\alpha_i+t[\eta_i])_{i=1}^N$.

We first claim that there exists a constant $C>0$ independent of $t$ and $\eta$ satisfying
\begin{equation}\label{compare}
\| 1-e^{f_i(t,\eta)}-H_i(t,\eta) \|_{L^2(\omega_0)}  \leq C\e_0 \| c(t,\eta) \|_{\mathrm{Euc}}.
\end{equation}
for each $i$ (Here the $L^2$-norm is defined with respect to the measure $\omega_0^n/\int_X\omega_0^n$).
Indeed, by definition of the holomorphic vector field $V(t,\eta)$ and by \eqref{holopote}, we have
\begin{eqnarray*}
\sqrt{-1}\bar{\partial}(1-e^{f_i(t,\eta)}-H_i(t,\eta)) 
&=&i_{V(t,\eta)}(t\eta_i+\deldel\phi_i(t,\eta)) \\
&=& \sum_{p=1}^d c_p(t,\eta) i_{V_p}(t\eta_i+\deldel\phi_i(t,\eta)),
\end{eqnarray*}
where $V_p$ denotes the gradient holomorphic vector field corresponding to $\bm{v}_p\in\mathbb{R}^N\oplus\mathcal{H}_{\frak{z}}$.
Since $\| \phi_i(t,\eta) \|_{W_G^{l+2,2}} \leq C\e_0$ for any $t\in[0,\e_0)$ by lemma \ref{IFT}, we have
\begin{eqnarray*}
|\Delta_{\omega_0}(1-e^{f_i(t,\eta)}-H_i(t,\eta))|
&=&\Bigl| \sum_{p=1}^dc_p(t,\eta) \mathrm{tr}_{\omega_0}\{\partial (i_{V_p}(t\eta_i+\deldel\phi_i(t,\eta)))\} \Bigr| \\
&\leq& C\e_0 \| c(t,\eta) \|_{\mathrm{Euc}}.
\end{eqnarray*}
Then the eigenvalue decomposition for $\Delta_{\omega_0}$ and the normalization condition in \eqref{holopote} shows the estimate \eqref{compare}.
%\begin{equation}\label{compare2}
%\| 1-e^{f_i(t,\eta)}-H_i(t,\eta) \|_{L^2(\theta_i)}\leq C\e_0 \| c(t,\eta) \|_{\mathrm{Euc}}.
%\end{equation}
%Thus the inequality \eqref{compare} follows from \eqref{compare2} and the coupled K\"ahler-Einstein condition $\theta_i^n/\int_X\theta_i^n=\omega_0^n/\int_X\omega_0^n$ for all $i$.

Now we estimate the norm $\| c(t,\eta) \|_{\mathrm{Euc}}$.
Since $\{\bm{v}_1, \dots, \bm{v}_d \}$ is an orthonomal basis of $\mathbb{R}^N\oplus\mathcal{H}_{\frak{z}}$ and since the equation \eqref{GKE}, we have
\begin{eqnarray*}
\| c(t,\eta) \|_{\mathrm{Euc}}^2 
&=& \lla \sum_{p=1}^d c_p(t,\eta)\bm{v}_p, \sum_{q=1}^d c_q(t,\eta)\bm{v}_q \rra_{\omega_0} \\
&=& \int_X \sum_{i=1}^N (1-e^{f_i(t,\eta)})^2 \frac{\omega_0^n}{\int_X\omega_0^n} \\
&\leq& C \int_X \sum_{i=1}^N (1-e^{f_i(t,\eta)})^2 \frac{\omega_i(t,\eta)^n}{\int_X\omega_i(t,\eta)^n}
\end{eqnarray*}
where $C>0$ is a constant independent of $t$ and $\eta$, and we used the estimate $\| \phi_i(t,\eta) \|_{W_G^{l+2,2}} \leq C\e_0$ in Lemma \ref{IFT}.
By the assumption $\mathcal{F}(t,\eta)=0$, the inequality \eqref{compare} and the Cauchy-Schwarz inequality, we thus have
\begin{eqnarray*}
\int_X \sum_{i=1}^N (1-e^{f_i(t,\eta)})^2 \frac{\omega_i(t,\eta)^n}{\int_X\omega_i(t,\eta)^n}
&=& \int_X \sum_{i=1}^N(1-e^{f_i(t,\eta)}-H_i(t,\eta)) (1-e^{f_i(t,\eta)}) \frac{\omega_i(t,\eta)^n}{\int_X\omega_i(t,\eta)^n} \\
&\leq& C \sum_{i=1}^N \|1-e^{f_i(t,\eta))}-H_i(t,\eta) \|_{L^2(\omega_0)} \| 1-e^{f_i(t,\eta))} \|_{L^2(\omega_0)} \\
&\leq& C\e_0 \| c(t,\eta) \|_{\mathrm{Euc}} \sqrt{N}\Bigl( \sum_{i=1}^N  \| 1-e^{f_i(t,\eta))} \|_{L^2(\omega_0)}^2 \Bigr)^{1/2} \\
&\leq& C\e_0 \| c(t,\eta) \|_{\mathrm{Euc}}^2,
\end{eqnarray*}
where each $C$'s is again a positive constant independent of $t$ and $\eta$.
Therefore if $\e_0>0$ is small enough then $c(t,\eta) =0$.
This completes the proof.
\end{proof}

Using the same technique as in the previous proof we prove Corollary \ref{Cor}.
%\section{Almost coupled K\"ahler-Einstein metrics}
%We use same notation as in the section \ref{deformcKE} to prove Corollary \ref{Cor}.
It follows from the assumption that $| \mathcal{F}(t,\eta) | \leq Ct^{m+1}$ for any $t\in [0,\e_0)$.
By the same calculation as the last estimate in the previous proof, the following holds;
\begin{eqnarray*}
\Bigl| \mathcal{F}(t,\eta)-\sum_{i=1}^N\int_X (1-e^{f_i(t,\eta)})^2 \frac{\omega_i^n(t,\eta)}{\int_X\omega_i^n(t,\eta)} \Bigr|
%&=&  \int_X \sum_{i=1}^N(1-e^{f_i(t,\eta)}-H_i(t,\eta)) (1-e^{f_i(t,\eta)}) \frac{\omega_i(t,\eta)^n}{\int_X\omega_i(t,\eta)^n} \\
%&\leq& C \sum_{i=1}^N \|1-e^{f_i(t,\eta))}-H_i(t,\eta) \|_{L^2(\omega_0)} \| 1-e^{f_i(t,\eta))} \|_{L^2(\omega_0)} \\
%&\leq& C\e_0 \| c(t,\eta) \|_{\mathrm{Euc}} \sqrt{N}\Bigl( \sum_{i=1}^N  \| 1-e^{f_i(t,\eta))} \|_{L^2(\omega_0)}^2 \Bigr)^{1/2} \\
&\leq& C\e_0 \| c(t,\eta) \|_{\mathrm{Euc}}^2.
\end{eqnarray*}
Since $\| \phi_i(t,\eta) \|_{W_G^{l+2,2}} \leq C\e_0$ by lemma \ref{IFT}, 
then $\sum_{i=1}^N\int_X (1-e^{f_i(t,\eta)})^2 \frac{\omega_i^n(t,\eta)}{\int_X\omega_i^n(t,\eta)} \leq C\| c(t,\eta) \|_{\mathrm{Euc}}^2$.
Thus $\| c(t,\eta)\|_{\mathrm{Euc}}^2 \leq C t^{m+1}$ after perhaps replacing the constant $\e_0$ with a smaller one.
In view of the equation \eqref{GKE},  we finally have 
$$\| 1-e^{f_i(t,\eta)} \|^2_{C^l(X)} \leq C\| c(t,\eta)\|_{\mathrm{Euc}}^2 \leq C t^{m+1}.$$
This completes the proof of Corrollary \ref{Cor}.

%%%%%%%%%%%%%%%%%%%%%%%%%%%%%%%%%%%%%%%%%%%%
\section{The asymptotic expansion of the function $\mathcal{F}$}\label{expansion}
We use same notation as in the previous section to prove the second part of Theorem \ref{main thm}.
Namely we determine the leading term of the asymptotic expansion of $\mathcal{F}(t,\eta)$ at $t=0$ 
under the technical assumption $\mathrm{tr}_{\theta_i}\eta_i=0$ for $i=1,2,\dots,N$. 
%This assumption is used only for the formula $\frac{d}{dt} \bigl|_{t=0} \int_X\omega_i^n(t,\eta)=0$.
%The order of the asymptotic expansion is independent of this assumption.
Let $h_{\eta}$ be a smooth function defined by $\sum_{j=1}^N\eta_j=\deldel h_{\eta}$ and $\int_Xh_{\eta}\omega_0^n=0$.
Define $\bm{h}_{\eta}=(h_{\eta}, \dots, h_{\eta}) \in (C^{\infty}_G(X;\mathbb{R}))^N$.
Let $\pi_{\frak{z}}$ be the $L^2$-projection from  $(W_G^{l,2})^N$ to $\mathbb{R}^N\oplus\mathcal{H}_{\frak{z}}$.
%, that is, $\pi_{\frak{z}} + \pi_{\frak{z}}^{\perp}=\mathrm{id}$.

\begin{proposition}\label{part2}
Suppose $\mathrm{tr}_{\theta_i}\eta_i=0$ for $i=1,2,\dots,N$.
Then $$\mathcal{F}(t, \eta)= t^2 \int_X |\pi_{\frak{z}}(\bm{h}_{\eta})|^2\frac{\omega_0^n}{\int_X\omega_0^n} + \mathcal{O}(t^3)\quad\text{as}\quad t\to 0.$$
\end{proposition}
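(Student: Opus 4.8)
The plan is to compute the Taylor expansion of $\mathcal{F}(t,\eta)$ in $t$ by first expanding the solution data $\Phi(t,\eta)=(\phi_i(t,\eta))_{i=1}^N$ and $c(t,\eta)=(c_p(t,\eta))_{p=1}^d$ furnished by Lemma~\ref{IFT} around $t=0$. Since $\Phi(0,\eta)=0$ and $c(0,\eta)=0$, we write $\phi_i = t\,\dot\phi_i + \tfrac{t^2}{2}\ddot\phi_i + \mathcal{O}(t^3)$ and $c_p = t\,\dot c_p + \mathcal{O}(t^2)$, where the dots denote $\partial_t$ at $t=0$. Differentiating the identity $\tilde{\mathbb{F}}(t,\Phi(t,\eta))=0$ once in $t$ and using Lemma~\ref{variationoff} (with the extra $t\eta_i$ contribution), the first-order equation reads $\pi_{\frak z}^\perp\bigl(\mathbb{L}(\dot\Phi) + (\mathrm{tr}_{\theta_i}\eta_i)_i\bigr)=0$ together with the normalizations $\int_X \dot\phi_i\,\omega_0^n=0$; under the hypothesis $\mathrm{tr}_{\theta_i}\eta_i=0$ this forces $\dot\phi_i=0$ for all $i$ since $\dot\Phi\in\mathcal{H}^\perp_{\frak z,l+2}$ and $\mathbb{L}$ restricted there is an isomorphism onto $\mathcal{H}^\perp_{\frak z,l}$. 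So under the trace-free assumption the first variation of the potentials vanishes, which is what makes the expansion start at order $2$.

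Next I would determine $\dot c_p$, equivalently the first variation $\dot\psi := \partial_t(1-e^{f_i(t,\eta)})|_{t=0}$, which by \eqref{GKE} lies in $\mathbb{R}^N\oplus\mathcal{H}_{\frak z}$ and equals $\sum_p \dot c_p \bm{v}_p = \pi_{\frak z}$ applied to the full first variation of $(1-e^{f_i})_i$. Computing that full first variation from \eqref{Ricci}: since $1-e^{f_i(t,\eta)} = -\Delta_{\omega_i(t)} (\text{something}) + \dots$, more directly, the first-order change of the Ricci potential $f_i$ when we move $\theta_i \mapsto \theta_i + t\eta_i$ (with $\dot\phi_i=0$) is, by the same derivation as Lemma~\ref{variationoff} applied to $\eta_i$ in place of $\deldel\delta\phi_i$, given by $\dot f_i = -\mathrm{tr}_{\theta_i}\eta_i - \sum_j h_{\eta}^{(j)} + \text{const}$ where $\sum_j \eta_j = \deldel h_\eta$; under $\mathrm{tr}_{\theta_i}\eta_i=0$ this collapses to $\dot f_i = -h_\eta + \text{const}$, hence $\partial_t(1-e^{f_i})|_{t=0} = -\dot f_i = h_\eta - \text{const}$, i.e. $\dot\psi = \bm{h}_\eta$ up to the constant fixed by the normalization. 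Applying $\pi_{\frak z}$ (noting constants are killed inside $\mathcal{H}_{\frak z}$ but live in the $\mathbb{R}^N$ factor, and that the relevant inner products of $1-e^{f_i}$ with $\bm v_p$ only see the component in $\mathbb{R}^N\oplus\mathcal{H}_{\frak z}$), we get $\sum_p \dot c_p \bm v_p = \pi_{\frak z}(\bm h_\eta)$, i.e. the leading coefficient vector of $c(t,\eta)$ is $\pi_{\frak z}(\bm h_\eta)$.

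Finally, I would feed this into the estimate already established in the proof of Corollary~\ref{Cor}, namely $\bigl|\mathcal{F}(t,\eta) - \sum_{i=1}^N\int_X(1-e^{f_i(t,\eta)})^2\,\omega_i(t,\eta)^n/\!\int_X\omega_i(t,\eta)^n\bigr|\le C\e_0\|c(t,\eta)\|_{\mathrm{Euc}}^2$, combined with $\sum_i\int_X(1-e^{f_i})^2\,\omega_i^n/\!\int_X\omega_i^n = \|c(t,\eta)\|_{\mathrm{Euc}}^2 + \mathcal{O}(\e_0\|c\|_{\mathrm{Euc}}^2)$ from the orthonormality computation in Theorem~\ref{part1}. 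Together these give $\mathcal{F}(t,\eta) = \|c(t,\eta)\|_{\mathrm{Euc}}^2\bigl(1+\mathcal{O}(\e_0)\bigr)$; to extract the clean $t^2$ coefficient I would instead argue more carefully that $\mathcal{F}(t,\eta) = \lla \bm\psi(t,\eta), \bm H(t,\eta)\rra_{\omega_0} + \mathcal{O}(t^3)$ where $\bm\psi = (1-e^{f_i})_i$ and $\bm H = (H_i)_i$, and that to leading order $\bm\psi(t,\eta) = t\,\pi_{\frak z}(\bm h_\eta) + \mathcal{O}(t^2)$ while $H_i(t,\eta)$ has the same leading term as $1-e^{f_i(t,\eta)}$ by \eqref{compare} (the discrepancy being $\mathcal{O}(\e_0\|c\|)=\mathcal{O}(t^2)$), so $\mathcal{F}(t,\eta) = t^2\lla \pi_{\frak z}(\bm h_\eta), \pi_{\frak z}(\bm h_\eta)\rra_{\omega_0} + \mathcal{O}(t^3) = t^2\int_X|\pi_{\frak z}(\bm h_\eta)|^2\,\omega_0^n/\!\int_X\omega_0^n + \mathcal{O}(t^3)$. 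The main obstacle I anticipate is bookkeeping the additive constants and the choice of reference volume form carefully: the Ricci potentials are normalized against $\omega_i^n$ while the projections and inner products are taken against $\omega_0^n$, and one must check these normalizations differ only at higher order in $t$; also verifying rigorously that $\bm\psi(t,\eta)$ has vanishing $\mathcal{H}^\perp_{\frak z}$-component to order $t$ (equivalently that the $\mathcal{H}^\perp_{\frak z}$-part of $1-e^{f_i}$ is $\mathcal{O}(t^2)$, which follows from $\tilde{\mathbb{F}}\equiv 0$ and $\dot\phi_i=0$) requires a short argument rather than being immediate.
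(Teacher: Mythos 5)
Your overall strategy is the same as the paper's (Taylor-expand the implicit-function-theorem solution at $t=0$, identify the first derivative of $(1-e^{f_i(t,\eta)})_{i}$ with $\pi_{\frak z}(\bm{h}_{\eta})$, and use that $H_i$ agrees with $1-e^{f_i}$ to the relevant order), and your final formula is right. But there is a genuine error in the first step: the claim that $\mathrm{tr}_{\theta_i}\eta_i=0$ forces $\dot\phi_i=0$. When you linearize $F_i=1-e^{f_i}$ in $t$ you must differentiate the \emph{whole} equation ${\rm Ric}(\omega_i)-\sum_j\omega_j=\deldel f_i$, and the term $-\sum_j\omega_j$ contributes $-\sum_j\eta_j=-\deldel h_\eta$, i.e.\ a $-h_\eta$ in $\dot f_i$, in addition to $-\mathrm{tr}_{\theta_i}\eta_i$ and $-\mathbb{L}(\dot\Phi)_i$. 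So the correct first-order equation is $\pi_{\frak z}^{\perp}\bigl(\mathbb{L}(\dot\Phi)+\bm{h}_{\eta}\bigr)=0$, not $\pi_{\frak z}^{\perp}\bigl(\mathbb{L}(\dot\Phi)+(\mathrm{tr}_{\theta_i}\eta_i)_i\bigr)=0$; hence $\mathbb{L}(\dot\Phi)=-\pi_{\frak z}^{\perp}(\bm{h}_{\eta})$, which is nonzero whenever $\bm{h}_{\eta}\notin\mathbb{R}^N\oplus\mathcal{H}_{\frak z}$. Your own second paragraph exposes the inconsistency: with $\dot\Phi=0$ you get $\dot{\bm\psi}=\bm{h}_{\eta}+\mathrm{const}$, but \eqref{GKE} forces $\dot{\bm\psi}\in\mathbb{R}^N\oplus\mathcal{H}_{\frak z}$, which is incompatible unless $\pi_{\frak z}^{\perp}(\bm{h}_{\eta})=0$. (The vanishing $\dot\phi_i=0$ is true only in the situation of the fourth-order expansion, where the initial metric is a trivial coupled KE metric and $\bm{h}_{\eta}=0$.)

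The conclusion nevertheless survives, and the repair is short: from $\dot{\bm\psi}=\mathbb{L}(\dot\Phi)+\bm{h}_{\eta}$ (up to normalizing constants), the fact that $\mathbb{L}$ is self-adjoint so that $\mathbb{L}(\dot\Phi)\in\mathcal{H}_{\frak z}^{\perp}$, and the fact that $\dot{\bm\psi}=\pi_{\frak z}(\dot{\bm\psi})$ by \eqref{GKE}, one gets $\dot{\bm\psi}=\pi_{\frak z}(\bm{h}_{\eta})$ directly, with no need to know $\dot\Phi$. This is exactly what the paper does. Two smaller points: (i) in your last paragraph you need the discrepancy $\|1-e^{f_i}-H_i\|_{L^2}$ to be $\mathcal{O}(t^2)$, whereas \eqref{compare} as stated only gives $\mathcal{O}(\e_0\|c\|)=\mathcal{O}(\e_0 t)$; the sharper bound does hold because $t\eta_i+\deldel\phi_i(t,\eta)=\mathcal{O}(t)$, but you should say so (the paper sidesteps this entirely by computing $\mathcal{F}''(0,\eta)=-2\sum_i\int_X H_i'(0,\eta)f_i'(0,\eta)\,\omega_0^n/\!\int_X\omega_0^n$ and showing $H_i'(0,\eta)=-f_i'(0,\eta)$ exactly, which is cleaner); (ii) the replacement of the measures $\omega_i(t,\eta)^n/\!\int_X\omega_i(t,\eta)^n$ by $\omega_0^n/\!\int_X\omega_0^n$ costs only $\mathcal{O}(t^3)$ since the integrand is already $\mathcal{O}(t^2)$ — you flag this correctly.
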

\begin{remark}
It follows from the proof of Proposition \ref{part2} that the first derivative $\mathcal{F}^{\prime}(0,\eta)$ with respect to $t$ vanishes without the assumption $\mathrm{tr}_{\theta_i}\eta_i=0$. 
This assumption is used for making the second derivative $\mathcal{F}^{\prime\prime}(0,\eta)$ simple to analyze.
\end{remark}
\begin{proof}
It is easy to see $\mathcal{F}(0,\eta)$ and the first derivative $\mathcal{F}^{\prime}(0, \eta)$ with respect to $t$ vanishes
by $H_i(0,\eta)=0$ and $f_i(0,\eta)=0$.
%$\omega_i(0,\eta)=\theta_i$ and $\frac{d}{dt}\int_X\omega_i^n(t,\eta)=0$.
%one has $$\mathcal{F}^{\prime}(0, \eta)=-\sum_{i=1}^N\int_X H^{\prime}_i(0,\eta)\frac{\theta_i^n}{\int_X\theta_i^n}.$$
%Take the derivative of the equation of the normalization condition in \eqref{holopote} to obtain $\int_X H^{\prime}_i(0,\eta)\theta_i^n=0$.
We describe the second derivative $\mathcal{F}^{\prime \prime}(0, \eta)$ in terms of the initial data.
%Taking the second derivative of the equation of the normalization condition in \eqref{holopote}, one has
%$\int_X H^{\prime \prime}_i(0,\eta)\theta_i^n+2\int_X H^{\prime}_i(0,\eta)\frac{d}{dt}\omega_i^n(t,\eta)|_{t=0}=0$ to conclude
It is also easy to see
\begin{eqnarray*}
\mathcal{F}^{\prime \prime}(0, \eta)
&=&-2\sum_{i=1}^N\int_X H^{\prime}_i(0,\eta) f^{\prime}_i(0,\eta)\frac{\theta_i^n}{\int_X\theta_i^n} \\
&=&-2\sum_{i=1}^N\int_X H^{\prime}_i(0,\eta) f^{\prime}_i(0,\eta)\frac{\omega_0^n}{\int_X\omega_0^n}.
\end{eqnarray*}
Here we used the equalities $\omega_0^n / \int_X\omega_0^n = \theta_1^n / \int_X\theta_1^n=\cdots = \theta_N^n / \int_X \theta_N^n$. 
%which follows from the definition of the coupled K\"ahler-Einstein metric.

The derivative of the defining equation of the holomorphic potential $H_i(t,\eta)$ in \eqref{holopote} shows
\begin{eqnarray*}
\sqrt{-1}\bar{\partial}H_i^{\prime}(0,\eta) 
&=&i_{V^{\prime}(0,\eta)}\omega_i(0,\eta)+i_{V(0,\eta)}\omega_i^{\prime}(0,\eta) \\
&=&-\sqrt{-1}\bar{\partial}f_i^{\prime}(0,\eta).
\end{eqnarray*}
Then $H^{\prime}_i(0,\eta)=-f_i^{\prime}(0,\eta)$ by equations 
$\int_XH^{\prime}_i(0,\eta)\theta_i^n=\int_X-f_i^{\prime}(0,\eta)\theta_i^n=0$
which come from the derivative of the normalization conditions for $H_i(t,\eta)$ and $f_i(t,\eta)$.

On the other hand, the derivative of the defining equation of the Ricci potential $f_i(t,\eta)$ as in \eqref{Ricci} 
together with the assumption $\mathrm{tr}_{\theta_i}\eta_i=0$ shows
$$f^{\prime}_i(0,\eta)= -\Delta_{\theta_i}\phi^{\prime}_i(0,\eta) -h_{\eta} -\sum_{j=1}^N\phi^{\prime}_j(0,\eta) +C$$ for some constant $C$.
Since this constant $C$ is equal to 
$\int_X\sum_{\j=1}^N\delta\phi_j\frac{\theta_i^n}{\int_X\theta_i^n}
=\int_X\sum_{\j=1}^N\delta\phi_j\frac{\omega_0^n}{\int_X\omega_0^n}$
by normalization conditions for $f_i^{\prime}(0,\eta)$ and $h_{\eta}$, then
\begin{equation}\label{derivativeoff}
(f_1^{\prime}(0,\eta), \dots, f_N^{\prime}(0,\eta))
=
-\mathbb{L}(\phi_1^{\prime}(0,\eta),\dots, \phi_N^{\prime}(0,\eta))-\bm{h}_{\eta}.
\end{equation}

Since the equation \eqref{GKE} shows
$\pi_{\frak{z}}(f_1^{\prime}(0,\eta), \dots, f_N^{\prime}(0,\eta))=(f_1^{\prime}(0,\eta), \dots, f_N^{\prime}(0,\eta))$
and since $\mathbb{L}(\phi_1^{\prime}(0,\eta),\dots, \phi_N^{\prime}(0,\eta)) \in \mathcal{H}_{l, \frak{z'}}^{\perp}$,
therefore $(f_1^{\prime}(0,\eta), \dots, f_N^{\prime}(0,\eta))=-\pi_{\frak{z}}(\bm{h}_{\eta})$ by \eqref{derivativeoff}.
This completes the proof.
\end{proof}

In the above proposition, if the initial coupled K\"ahler-Einstein metric $\ctheta$ is trivial,
that is, if there exists positive constants $(\lambda_i)_{i=1}^N$ satisfying $\sum_i\lambda_i=1$ and $\theta_i=\lambda_i\ke$ for  all $i$,
then the coefficient $\int_X |\pi_{\frak{z}}(\bm{h}_{\eta})|^2\omega_0^n / \int_X\omega_0^n$ in the asymptotic expansion vanishes.
Indeed $\Delta_{\ke}h_{\eta}=\sum_i\mathrm{tr}_{\theta_i}\eta_i/\lambda_i=0$, and thus $\bm{h}_{\eta}=0$ by the normalization condition of $h_{\eta}$.
We show the following to end this section.
Define 
$$\bm{I}_{\eta}
=\Bigl( |\eta_1|^2_{\theta_1}-\int_X|\eta_1|^2_{\theta_1}\frac{\theta_1^n}{\int_X\theta_1^n}, 
\dots, 
    |\eta_N|^2_{\theta_N}-\int_X|\eta_N|^2_{\theta_N}\frac{\theta_N^n}{\int_X\theta_N^n} \Bigr)\in (C^{\infty}_G(X;\mathbb{R}))^N.$$
%and also write $\pi^{\perp}_{\ke}$ as the projection $\pi^{\perp}_{\omega_0}$.

\begin{proposition}\label{part3}
Suppose $\mathrm{tr}_{\theta_i}\eta_i=0$ for $i=1,2,\dots, N$. 
Suppose also that there exist a K\"ahler-Einstein metric $\ke$
and positive constants $(\lambda_i)_{i=1}^N$ satisfying $\sum_{j=1}^N\lambda_j=1$ and $\theta_i=\lambda_i\ke$ for $i=1,2,\dots,N$.
Then $$\mathcal{F}(t, \eta)= \frac{t^4}{4} \int_X |\pi_{\frak{z}}(\bm{I}_{\eta})|^2\frac{\ke^n}{\int_X\ke^n} + \mathcal{O}(t^5)\quad\text{as}\quad t\to 0.$$
\end{proposition}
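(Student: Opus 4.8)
The plan is to imitate the computation in the proof of Proposition \ref{part2}, pushing the Taylor expansion of $\mathcal{F}(t,\eta)$ at $t=0$ two orders further. First I would record the simplifications forced by triviality of $\ctheta$. Since $\Delta_{\ke}h_\eta=\sum_j\mathrm{tr}_{\theta_j}\eta_j/\lambda_j=0$ and $\int_Xh_\eta\omega_0^n=0$, we get $h_\eta=0$, hence $\bm{h}_\eta=0$. Feeding this into \eqref{derivativeoff} and using, as in the proof of Proposition \ref{part2}, that $(f_i'(0,\eta))_i=-\pi_{\frak{z}}(\bm{h}_\eta)=0$, we deduce $\mathbb{L}((\phi_i'(0,\eta))_i)=0$; since $(\phi_i'(0,\eta))_i\in\mathcal{H}_{\frak{z},l+2}^{\perp}$, on which $\mathbb{L}$ is injective, $\phi_i'(0,\eta)=0$ for all $i$. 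Consequently $V(0,\eta)=V'(0,\eta)=0$ and $H_i'(0,\eta)=-f_i'(0,\eta)=0$, so each of $f_i$, $H_i$, $1-e^{f_i}$ is $\mathcal{O}(t^2)$. Expanding the integrand of $\mathcal{F}$ and using $\omega_i(t,\eta)^n/\int_X\omega_i(t,\eta)^n=\omega_0^n/\int_X\omega_0^n+\mathcal{O}(t)$ together with $\omega_0^n/\int_X\omega_0^n=\ke^n/\int_X\ke^n$, this already gives $\mathcal{F}(t,\eta)=-\tfrac{t^4}{4}\sum_i\int_XH_i''(0,\eta)f_i''(0,\eta)\,\ke^n/\int_X\ke^n+\mathcal{O}(t^5)$; in particular the $t^3$-coefficient vanishes automatically.

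Next I would show $H_i''(0,\eta)=-f_i''(0,\eta)$. Differentiating $i_{V(t,\eta)}\omega_i(t,\eta)=\sqrt{-1}\dbar H_i(t,\eta)$ twice at $t=0$ and using $V(0,\eta)=V'(0,\eta)=0$ gives $i_{V''(0,\eta)}\theta_i=\sqrt{-1}\dbar H_i''(0,\eta)$; differentiating instead the identity $i_{V(t,\eta)}\theta_i=\sqrt{-1}\dbar(1-e^{f_i(t,\eta)})$ twice (recall $V(t,\eta)=\mathrm{grad}_{\theta_i}(1-e^{f_i(t,\eta)})$) and using $f_i(0,\eta)=f_i'(0,\eta)=0$ gives $i_{V''(0,\eta)}\theta_i=-\sqrt{-1}\dbar f_i''(0,\eta)$. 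Hence $H_i''(0,\eta)$ and $-f_i''(0,\eta)$ differ by a constant, which vanishes because the second derivatives of the normalizations $\int_XH_i(t,\eta)\omega_i(t,\eta)^n=\int_X(1-e^{f_i(t,\eta)})\omega_i(t,\eta)^n=0$ give $\int_XH_i''(0,\eta)\theta_i^n=\int_Xf_i''(0,\eta)\theta_i^n=0$. Thus $\mathcal{F}(t,\eta)=\tfrac{t^4}{4}\int_X|(f_i''(0,\eta))_i|^2\,\ke^n/\int_X\ke^n+\mathcal{O}(t^5)$, and it remains to identify $(f_i''(0,\eta))_i$ with $\pi_{\frak{z}}(\bm{I}_\eta)$.

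For this I would differentiate the defining relation \eqref{Ricci} twice. Since $\phi_i'(0,\eta)=0$, we have $\omega_i'(0,\eta)=\eta_i$ and $\omega_i''(0,\eta)=\deldel\phi_i''(0,\eta)$. Using the second variation formula for the Ricci form along a path $\omega(t)$, namely $\tfrac{d^2}{dt^2}\mathrm{Ric}(\omega(t))|_0=-\deldel(\mathrm{tr}_{\omega(0)}\ddot{\omega}(0)-|\dot{\omega}(0)|^2_{\omega(0)})$, together with $\mathrm{tr}_{\theta_i}\eta_i=0$, the second derivative of \eqref{Ricci} yields $f_i''(0,\eta)=-\Delta_{\theta_i}\phi_i''(0,\eta)+|\eta_i|^2_{\theta_i}-\sum_j\phi_j''(0,\eta)+C_i$; evaluating the constants via $\int_Xf_i''(0,\eta)\theta_i^n=0$ and $\theta_i^n/\int_X\theta_i^n=\omega_0^n/\int_X\omega_0^n$ turns this into $(f_i''(0,\eta))_i=-\mathbb{L}((\phi_i''(0,\eta))_i)+\bm{I}_\eta$. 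Finally, differentiating \eqref{GKE} twice shows $(-f_i''(0,\eta))_i=\sum_pc_p''(0,\eta)\bm{v}_p\in\mathbb{R}^N\oplus\mathcal{H}_{\frak{z}}$, i.e. $\pi_{\frak{z}}$ fixes $(f_i''(0,\eta))_i$; and $(\phi_i''(0,\eta))_i\in\mathcal{H}_{\frak{z},l+2}^{\perp}$ (being the second derivative of a curve valued in this closed subspace), so $\mathbb{L}((\phi_i''(0,\eta))_i)\in\mathcal{H}_{\frak{z},l}^{\perp}$ by self-adjointness of $\mathbb{L}$ and is killed by $\pi_{\frak{z}}$. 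Applying $\pi_{\frak{z}}$ to the last displayed identity gives $(f_i''(0,\eta))_i=\pi_{\frak{z}}(\bm{I}_\eta)$, completing the proof.

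The main obstacle I anticipate is the bookkeeping in the last two paragraphs: correctly computing the second variation of the Ricci form along $\omega_i(t,\eta)$ (where the vanishing $\mathrm{tr}_{\theta_i}\eta_i=0$ and $\phi_i'(0,\eta)=0$ must be invoked to suppress the first-order cross-terms), and then pinning down every additive constant coming from the normalizations so that the operator $\mathbb{L}$ and the vector $\bm{I}_\eta$ appear exactly on the nose. Everything else — that the expansion starts at order $t^4$, and that the $t^4$-coefficient is the squared $L^2$-norm of $(f_i''(0,\eta))_i$ — follows formally from the product structure of $\mathcal{F}$ once the first-order data is seen to vanish, exactly as in Proposition \ref{part2}.
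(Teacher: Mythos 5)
Your proposal is correct and follows essentially the same route as the paper: vanishing of the first-order data ($\bm{h}_\eta=0$, $f_i'(0,\eta)=H_i'(0,\eta)=0$, $\phi_i'(0,\eta)=0$), the identity $H_i''(0,\eta)=-f_i''(0,\eta)$, the second derivative of the Ricci-potential equation giving $(f_i''(0,\eta))_i=-\mathbb{L}((\phi_i''(0,\eta))_i)+\bm{I}_\eta$, and projection by $\pi_{\frak{z}}$. The only differences are cosmetic (you extract the $t^4$-coefficient by expanding the integrand rather than computing $\mathcal{F}^{(4)}(0,\eta)$ directly, and you conclude $\phi_i'(0,\eta)=0$ from injectivity of $\mathbb{L}$ on $\mathcal{H}_{\frak{z},l+2}^{\perp}$ rather than the paper's ``constant, hence zero'' step).
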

\begin{proof}
It is easy to see the third derivative $\mathcal{F}^{(3)}(0,\eta)$ with respect to $t$ vanishes by formulas 
$H_i(0,\eta)=f_i(0,\eta)=0$ and $H_i^{\prime}(0,\eta)=f_i^{\prime}(0,\eta)=0$ which come from the proof of Proposition \ref{part2} and $\bm{h}_{\eta}=0$.
In the following we describe the forth derivative $\mathcal{F}^{(4)}(0,\eta)$ in terms of the initial data.
A direct calculation with the above formulas shows 
$$\mathcal{F}^{(4)}(0,\eta)=-6\sum_{i=1}^N\int_XH_i^{\prime\prime}(0,\eta)f_i^{\prime\prime}(0,\eta)\frac{\ke^n}{\int_X\ke^n}.$$

By the second derivative of the defining equation of $H_i(t,\eta)$,
\begin{eqnarray*}
\sqrt{-1}\bar{\partial}H_i^{\prime\prime}(0,\eta) 
&=&i_{V^{\prime\prime}(0,\eta)}\omega_i(0,\eta)+2i_{V^{\prime}(0,\eta)}\omega_i^{\prime}(0,\eta)+i_{V(0,\eta)}\omega_i^{\prime\prime}(0,\eta) \\
&=&-\sqrt{-1}\bar{\partial}f_i^{\prime\prime}(0,\eta).
\end{eqnarray*}
Then $H_i^{\prime\prime}(0,\eta) =-f_i^{\prime\prime}(0,\eta)$, 
since $\int_XH^{\prime\prime}_i(0,\eta)\theta_i^n=\int_X-f_i^{\prime\prime}(0,\eta)\theta_i^n=0$
given by the second derivative of normalization conditions for $H_i(t,\eta)$ and $f_i(t,\eta)$.

Also observe $\phi_i^{\prime}(0,\eta)=0$.
Indeed formulas $f_i^{\prime}(0,\eta)=0$ and $\bm{h}_{\eta}=0$ and the equation \eqref{derivativeoff} show
$\phi_i^{\prime}(0,\eta)$ is constant, 
and its constant is equals to $0$ by $\int_X\phi_i^{\prime}(0,\eta)\omega_0^n=0$ 
which come from the derivative of the condition 
$\tilde{F}_k(t,(\phi_i(t,\eta))_{i=1}^N)=0$ for $k=N+1, \dots, 2N$ in the modified operator \eqref{modifiedop}.
%Now we claim 
%\begin{equation}
%(f_1^{\prime\prime}(0,\eta),\dots,f_N^{\prime\prime}(0,\eta))=-\bm{I}_{\eta}.
%\end{equation}

The second derivative of the defining equation of $f_i(t,\eta)$ 
together with the formula $\phi_i^{\prime}(0,\eta)=0$ and the assumption $\mathrm{tr}_{\theta_i}\eta_i=0$ shows
\begin{eqnarray*}
f_i^{\prime\prime}(0,\eta)
&=& \frac{d}{dt}\Bigl|_{t=0} \Bigl( -\mathrm{tr}_{\omega_i(t,\eta)}\bigl(\eta_i+\deldel\phi_i^{\prime}(t,\eta)\bigr)-h_{\eta}-\sum_{j=1}^N\phi_j^{\prime}(t,\eta)  \Bigr) + C \\
&=& |\eta_i|_{\theta_i}^2 -\Delta_{\theta_i}\phi_i^{\prime\prime}(0,\eta)-\sum_{j=1}^N\phi_j^{\prime\prime}(0,\eta) +C
\end{eqnarray*}
for some constant $C$.
Then $C=\int_X\sum_{j=1}^N\phi_j^{\prime\prime}(0,\eta)\frac{\omega_0^n}{\int_X\omega_0^n}
-\int_X|\eta_1|^2_{\theta_1}\frac{\theta_1^n}{\int_X\theta_1^n}$
by the normalization $\int_Xf_i^{\prime\prime}(0,\eta)\theta_i^n=\int_Xf_i^{\prime\prime}(0,\eta)\omega_0^n=0$.
Thus we have
\begin{equation*}
(f_1^{\prime\prime}(0,\eta),\dots,f_N^{\prime\prime}(0,\eta))
=-\mathbb{L}(\phi_1^{\prime\prime}(0,\eta),\dots,\phi_N^{\prime\prime}(0,\eta))+\bm{I}_{\eta}.
\end{equation*}

In view of the equation \eqref{GKE},
$\pi_{\frak{z}}(f_1^{\prime\prime}(0,\eta), \dots, f_N^{\prime\prime}(0,\eta))=(f_1^{\prime\prime}(0,\eta), \dots, f_N^{\prime\prime}(0,\eta)).$
Therefore $(f_1^{\prime\prime}(0,\eta), \dots, f_N^{\prime\prime}(0,\eta))=\pi_{\frak{z}}(\bm{I}_{\eta}).$
This completes the proof.
\end{proof}

%%%%%%%%%%%%%%%%%%%%%%%%%%%%%%%%%%%%%%
%%%%%%%%%%%%%%%%%%%%%%%%%%%%%%%%%%%%%%%%%%%%
\section{Deformation for complex structure and coupled K\"ahler-Einstein metrics}\label{cpxdeform}
In this section we consider the deformation of a coupled K\"ahler-Einstein metric on a Fano manifold under the deformation of  the complex structure
by applying the technique used in Section \ref{deformcKE}.
Let $(X, J)$ be a Fano manifold with a complex structure admitting a coupled K\"ahler-Einstein metric $(\theta_i)_{i=1}^N$.
As in Section \ref{deformcKE} we fix a K\"ahler metric $\omega_0$ satisfying $\mathrm{Ric}(\omega_0)=\sum_{i=1}^N\theta_i$.
Consider a smooth family of complex structure $J(t)$ with $J(0)=J$.
Kodaira-Spencer \cite{Kobook} showed that 
there exists a smooth family of compatible K\"ahler metric $\theta_i(t)$ with $J(t)$ for small $t>0$ satisfying $\theta_i(0)=\theta_i$ .
For our purpose, we only consider smooth families $J(t)$ and $(\theta_i(t))_{i=1}^N$ satisfying $\sum_{i=1}^N [\theta_i(t)] = 2\pi c_1(X,J(t))$. 
In this paper, such pair $(J(t), (\theta_i(t))_{i=1}^N)$ is called complex deformation of $(J, (\theta_i)_{i=1}^N)$.
We ask whether there exists a coupled K\"ahler-Einstein metrics for the decomposition $([\theta_i(t)])_{i=1}^N$.

Let $G$ be the identity component of the isometry group of the K\"ahler metric $\theta_i$.
Recall the identity component of the automorphism group of $(X, J)$ is the complexification of $G$.
The action of $G$ may not extend to $(X, J(t))$ in general.
Based on the idea of Rollin-Simanca-Tipler \cite{RST13}, 
we assume that there exists a compact connected subgroup $G'$ of $G$ such that the action of $G'$ extends holomorphically on the complex deformation $(J(t), (\theta_i(t))_{i=1}^N)$.
Let $B_{G'}$ be the space of complex deformations of $(J, (\theta_i)_{i=1}^N)$ admitting a holomorphic $G'$-action. 
%As in the section \ref{deformcKE},
Let $\frak{g'}$ be the Lie algebra of $G'$, and
%Since $X$ is Fano, $\frak{g'}$ is nothing but the ideal of killing vector fileds with zeros.
%Any element $\xi\in\frak{g}$ corresponds to the holomorphic vector field $J\xi+\sqrt{-1}\xi$.
$\frak{z'}$ be the center of $\frak{g'}$.
%For any $G'$-invariant K\"ahler metric $\omega$ and for any $\xi\in\frak{z'}$, the holomorphic vector field $V=J\xi+\sqrt{-1}\xi$ defines a smooth real-valued G-invariant function $H$ satisfying 
%$$i_V\omega=\sqrt{-1}\dbar H \quad\text{and}\quad \int_XH\omega^n=0.$$
%The function $H$ is called the holomorphic potential of $V$ with respect to $\omega$.
%For the holomorphic potentials $u_i$ of $V$ with respect to $\theta_i$, we call $\bm{u}=(u_1, \dots, u_N)$ the holomorphic potential vector of $V$ with respect to the coupled K\"ahler-Einstein metric $(\theta_i)_{i=1}^N$.
As in Section \ref{deformcKE}, let $\mathcal{H}_{\frak{z}'} \subset (W_{G'}^{l+2,2}(X))^N$ be the space of holomorphic potential vectors corresponding to elements in $\frak{z}'$ with respect to $(\theta_i)_{i=1}^N$,
% endowed with the $L^2$-inner product 
%$\lla \bm{u},\bm{v} \rra_{\omega_0}=\int_X \langle\bm{u}, \bm{v}\rangle\omega_0^n/\int_X\omega_0^n$.
and fix an orthonormal basis $\bm{v}_1, \dots, \bm{v}_d$ of $\mathbb{R}^N\oplus\mathcal{H}_{\frak{z}'}$ 
with respect to $\lla \cdot,\cdot \rra_{\omega_0}$.
Take the orthogonal decomposition $(W_{G'}^{l+2,2}(X))^N=\mathbb{R}^N\oplus\mathcal{H}_{\frak{z}'}\oplus\mathcal{H}_{\frak{z}, l+2}^{\perp}$,
and define $\pi_{\frak{z'}}^{\perp}$ as the projection $(W_{G'}^{l+2,2}(X))^N \to \mathcal{H}_{\frak{z}, l+2}^{\perp}$.
%The following is the main result of this section.
%\begin{theorem}
%Let $(X, J, (\theta_i)_{i=1}^N)$ be a Fano manifold with a complex structure admitting a coupled K\"ahler-Einstein metric satisfing
%\begin{equation}\label{nondeg}
%\Ker\mathbb{L}\cap(W_{G'}^{l+2,2}(X))^N\subset \mathbb{R}^N\oplus\mathcal{H}_{\frak{z'}}.
%\end{equation}
%For any $(J(t), (\theta_i(t))_{i=1}^N) \in B_{G'}$, there exists $\e_0>0$ and a smooth function $\mathcal{G}: [0,\e_0)\to\mathbb{R}$ such that
%if $\mathcal{G}(t)=0$ for some $t\in [0,\e_0)$, then there exists a coupled K\"ahler-Einstein metric for the decomposition $([\theta_i])_{i=1}^N$.
%\end{theorem}

Fix $(J(t), (\theta_i(t))_{i=1}^N) \in B_{G'}$.
%Let $W_{G'}^{l+2,2}(X)$ be the subspace of $G'$-invariant real function in the sobolev space $W^{l+2,2}(X)$.
For a neighborhood $\mathcal{U}_{l+2}\subset (W_{G'}^{l+2,2}(X))^N$ at the origin,
it is able to  assume there exists $\e>0$ such that 
$\theta_i(t)+\del_t\dbar_t\phi_i$ defines a K\"ahler metric for any $t\in[0,\e)$, any $(\phi_i)_{i=1}^N\in\mathcal{U}_{l+2}$ and each $i$, 
where $\dbar_t:=\frac{1}{2}(d-\sqrt{-1}J(t)d)$ and $\partial_t$ is its complex conjugate.
For $\Phi=(\phi_i)_{i=1}^N\in\mathcal{U}_{l+2}$, 
we denote by $(f_i(t, \Phi))_{i=1}^N$ the Ricci potential for $(\theta_i(t)+\del_t\dbar_t\phi_i)_{i=1}^N$.
In order to construct a coupled K\"ahler-Einstein metric for the decomposition $([\theta_i(t)])_{i=1}^N$,
consider an operator 
$\tilde{\mathbb{F}}: [0, \e)\times(\mathcal{U}_{l+2} \cap \mathcal{H}_{\frak{z'}, l+2}^{\perp}) \to  \mathcal{H}_{\frak{z'}, l}^{\perp} \times\mathbb{R}^N$
defined by
\begin{eqnarray}
  \begin{cases}
   (\tilde{F}_1,\dots, \tilde{F}_N)&= \pi_{\frak{z'}}^{\perp}(F_1,\dots, F_N) \\
   \tilde{F}_{N+i}&= F_{N+i} \quad (i=1,2, \dots,N),  \\
  \end{cases}
\end{eqnarray}
where each $F_k$ is defined by the same manner as \eqref{operator}.
Then the implicit function theorem shows the following;
\begin{lemma}\label{IFT'}
Suppose $\Ker\mathbb{L}\cap(W_{G'}^{l+2,2}(X))^N\subset \mathbb{R}^N\oplus\mathcal{H}_{\frak{z'}}$.
For any $(J(t), (\theta_i(t))_{i=1}^N) \in B_{G'}$, there exists $\e_0>0$ such that for any $t\in [0,\e_0)$, 
we have K\"ahler potentials $(\phi_i(t))_{i=1}^N\in\mathcal{U}_{l+2} \cap \mathcal{H}_{\frak{z'}, l+2}^{\perp}$ 
and functions $c(t):=(c_1(t),\dots, c_d(t)): [0,\e_0)\to\mathbb{R}^d$ satisfying
 \begin{equation}\label{GKE'}
\Bigl(1-e^{f_1(t)},\dots, 1-e^{f_N(t)} \Bigr)=\sum_{p=1}^d c_p(t)\bm{v}_p,
\end{equation}
where $(f_i(t))_{i=1}^N$ denotes the Ricci potential for the K\"ahler metrics $(\theta_i +\del_t\dbar_t\phi_i(t))_{i=1}^N$.
Moreover there exists $C>0$ such that for each $i=1,2,\dots,N$ and for any $t\in [0,\e_0)$, we have
$\| \phi_i(t) \|_{W_{G'}^{l+2,2}} \leq C\e_0$
and $\| c(t) \|_{\rm Euc} \leq C\e_0$.
\end{lemma}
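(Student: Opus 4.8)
The statement to prove is Lemma \ref{IFT'}, which is the complex-deformation analogue of Lemma \ref{IFT}.

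\medskip

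The plan is to mimic the implicit-function-theorem argument of Section \ref{deformcKE} verbatim, now carrying the deformation parameter $t$ through the varying complex structure $J(t)$ and the varying background metrics $\theta_i(t)$ rather than through the cohomological perturbation $t\eta_i$. First I would observe that the modified operator $\tilde{\mathbb{F}}$ is well defined and smooth on $[0,\e)\times(\mathcal{U}_{l+2}\cap\mathcal{H}_{\frak{z}',l+2}^\perp)$: the only point needing care is that the map $t\mapsto (J(t),(\theta_i(t))_{i=1}^N)$ is smooth by hypothesis (it lies in $B_{G'}$), so $\dbar_t$, $\partial_t$, the Ricci forms $\mathrm{Ric}(\theta_i(t)+\partial_t\dbar_t\phi_i)$, and hence the Ricci potentials $f_i(t,\Phi)$ depend smoothly on $(t,\Phi)$; the $G'$-equivariance of the whole construction guarantees that $\tilde{\mathbb{F}}$ lands in the $G'$-invariant spaces $\mathcal{H}_{\frak{z}',l}^\perp\times\mathbb{R}^N$. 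Also $\tilde{\mathbb{F}}(0,0)=0$ since $(\theta_i)_{i=1}^N$ is coupled K\"ahler-Einstein.

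\medskip

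Next I would compute the linearization $\delta_\Phi\tilde{\mathbb{F}}(0,0)$ in the $\Phi$-direction. By exactly the computation behind Lemma \ref{variationoff}, evaluated at $t=0$ where $J(0)=J$ and $\theta_i(0)=\theta_i$, the derivative of $1-e^{f_i}$ along $\delta\Phi$ is $-\Delta_{\theta_i}\delta\phi_i-\sum_j\delta\phi_j+\int_X\sum_j\delta\phi_j\,\omega_0^n/\int_X\omega_0^n$, i.e. $-\mathbb{L}(\delta\Phi)$ componentwise, while the derivative of $F_{N+i}$ is $\int_X\delta\phi_i\,\omega_0^n$ (the $F_{N+1}$ term again contributes $-\int_X\sum_j\delta\phi_j\,\omega_0^n$, whose vanishing together with the other $N-1$ conditions is equivalent to all $\int_X\delta\phi_i\,\omega_0^n=0$). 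Hence $\delta_\Phi\tilde{\mathbb{F}}(0,0)(\delta\Phi)=0$ forces $\pi_{\frak{z}'}^\perp\circ\mathbb{L}(\delta\Phi)=0$ together with $\int_X\delta\phi_i\,\omega_0^n=0$. Here is where the hypothesis $\Ker\mathbb{L}\cap(W_{G'}^{l+2,2}(X))^N\subset\mathbb{R}^N\oplus\mathcal{H}_{\frak{z}'}$ enters: it says $\Ker\mathbb{L}$ restricted to $G'$-invariant functions is contained in the span of $\bm{v}_1,\dots,\bm{v}_d$, so on the complement $\mathcal{H}_{\frak{z}',l+2}^\perp$ the operator $\mathbb{L}$ is injective and, by self-adjointness of $\mathbb{L}$ with respect to $\lla\cdot,\cdot\rra_{\omega_0}$, maps $\mathcal{H}_{\frak{z}',l+2}^\perp$ isomorphically onto $\mathcal{H}_{\frak{z}',l}^\perp$; combined with the normalization conditions this shows $\delta_\Phi\tilde{\mathbb{F}}(0,0)$ is an isomorphism from $T_0(\mathcal{U}_{l+2}\cap\mathcal{H}_{\frak{z}',l+2}^\perp)$ onto $\mathcal{H}_{\frak{z}',l}^\perp\times\mathbb{R}^N$. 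Applying the implicit function theorem in these Banach (Sobolev) spaces yields, for $0\le t<\e_0$, a unique small $(\phi_i(t))_{i=1}^N\in\mathcal{U}_{l+2}\cap\mathcal{H}_{\frak{z}',l+2}^\perp$ with $\tilde{\mathbb{F}}(t,(\phi_i(t))_{i=1}^N)=0$; unwinding $\tilde F_1,\dots,\tilde F_N=0$ means $(1-e^{f_1(t)},\dots,1-e^{f_N(t)})$ is $\lla\cdot,\cdot\rra_{\omega_0}$-orthogonal to $\mathcal{H}_{\frak{z}',l}^\perp$, hence lies in $\mathbb{R}^N\oplus\mathcal{H}_{\frak{z}'}=\mathrm{span}\{\bm{v}_p\}$, giving the coefficients $c_p(t)$ and the representation \eqref{GKE'}.

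\medskip

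Finally I would record the uniform bounds. Since $\tilde{\mathbb{F}}$ is $C^1$ and $\tilde{\mathbb{F}}(0,0)=0$, the implicit function theorem's quantitative form gives $\|(\phi_i(t))_{i=1}^N\|_{(W_{G'}^{l+2,2})^N}\le C t\le C\e_0$ after shrinking $\e_0$; projecting \eqref{GKE'} and using that $\{\bm{v}_p\}$ is orthonormal, $\|c(t)\|_{\mathrm{Euc}}^2=\int_X\sum_i(1-e^{f_i(t)})^2\,\omega_0^n/\int_X\omega_0^n$, which is controlled by $\|\phi_i(t)\|$ and the size of the deformation, hence $\le C\e_0$. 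I expect the only genuine obstacle to be bookkeeping rather than mathematics: verifying carefully that the construction is uniformly $G'$-equivariant so that all operators restrict to the $G'$-invariant subspaces, and that the family $(J(t),(\theta_i(t))_{i=1}^N)\in B_{G'}$ enters smoothly enough that $\tilde{\mathbb{F}}$ is $C^1$ up to $t=0$; once the linearization is identified with $\pi_{\frak{z}'}^\perp\circ\mathbb{L}$ plus the normalization functionals, its invertibility under \eqref{nondeg} is immediate from self-adjointness of $\mathbb{L}$, exactly as in Lemma \ref{IFT}.
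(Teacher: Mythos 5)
Your proposal is correct and follows essentially the same route as the paper: identify the linearization of $\tilde{\mathbb{F}}$ at $(0,0)$ with $\pi_{\frak{z}'}^{\perp}\circ\mathbb{L}$ together with the normalization functionals, observe that the hypothesis $\Ker\mathbb{L}\cap(W_{G'}^{l+2,2}(X))^N\subset\mathbb{R}^N\oplus\mathcal{H}_{\frak{z}'}$ (with self-adjointness of $\mathbb{L}$) makes this an isomorphism onto $\mathcal{H}_{\frak{z}',l}^{\perp}\times\mathbb{R}^N$, and apply the implicit function theorem; the representation \eqref{GKE'} and the uniform bounds then follow exactly as in Lemma \ref{IFT}. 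The paper's own proof is just a terse statement of these same points, so your write-up is a faithful (and somewhat more detailed) version of it.
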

\begin{proof}
By the same calculation as in Lemma \ref{variationoff}, the equation $\delta_{\Phi}\tilde{\mathbb{F}}(0,0)=0$ for a variation $(\delta\phi_1, \dots , \delta\phi_N)\in T_{(0,0)}(\{0\}\times (\mathcal{U}_{l+2}\cap\mathcal{H}_{\frak{z'}, l+2}^{\perp}))$ is given by
\begin{eqnarray}
  \begin{cases}
    \pi_{\frak{z'}}^{\perp} \circ \mathbb{L}(\delta\phi_1, \dots , \delta\phi_N) =0  \\
    \int_X\delta\phi_i\omega_0^n=0 \quad\text{for}\quad i=1,2,\dots,N.  \\
   \end{cases}
\end{eqnarray}
%\begin{eqnarray}
%  \begin{cases}
%   (\delta_{\Phi}\tilde{F}_1,\dots, \delta_{\Phi}\tilde{F}_N)(\delta\phi_1, \dots , \delta\phi_N)
%   &= \pi_{\frak{z'}}^{\perp}\mathbb{L}(\delta\phi_1, \dots , \delta\phi_N) \\
%   \delta_{\Phi}\tilde{F}_{N+i}
%   &= F_{N+i} \quad (i=1,2, \dots,N),  \\
%  \end{cases}
%\end{eqnarray}
Therefore the linearized operator 
$\delta_{\Phi}\tilde{\mathbb{F}}(0,0): T_{(0,0)}(\{0\}\times(\mathcal{H}_{\frak{z'}, l+2}^{\perp}\cap\mathcal{U}_{l+2}))
\to T_{(0,0)}(\mathcal{H}_{\frak{z'},l}^{\perp}\times\mathbb{R}^N)$ is invertible 
if and only if the condition $\Ker\mathbb{L}\cap(W_{G'}^{l+2,2}(X))^N\subset \mathbb{R}^N\oplus\mathcal{H}_{\frak{z'}}$ is satisfied.
\end{proof}

Now we define the function $\mathcal{G}: [0,\e_0)\to \mathbb{R}$ in Theorem \ref{Thm2}. 
Under the assumption $\Ker\mathbb{L}\cap(W_{G'}^{l+2,2}(X))^N\subset \mathbb{R}^N\oplus\mathcal{H}_{\frak{z'}}$, 
we have $(\phi_i(t))_{i=1}^N\in\mathcal{U}_{l+2} \cap \mathcal{H}_{\frak{z'}, l+2}^{\perp}$
and  $c(t):=(c_1(t),\dots, c_d(t)): [0,\e_0)\to\mathbb{R}^d$ as in Lemma \ref{IFT'}.
Let $\xi_p$ be the killing vector field in $\frak{z}'$ corresponding to $\bm{v}_p\in\mathbb{R}^N\oplus\mathcal{H}_{\frak{z'}}$.
For $p=1,\dots d$, the vector field $V_p(t):=J(t)\xi_p+\sqrt{-1}\xi_p$ is holomorphic on $(X, J(t))$ since $(J(t), (\theta_i(t))_{i=1}^N) \in B_{G'}$.
We define $V(t):=\sum_{p=1}^d c_p(t)V_p(t)$. 
%where $(c_1(t),\dots, c_d(t))$ are functions in Lemma \ref{IFT'}.
The holomorphic potential $H_i(t)$ for $V(t)$ with respect to $\omega_i(t):=\theta_i(t)+\del_t\dbar_t\phi_i(t)$ is defined by
\begin{equation}\label{H(t)}
i_{V(t)}\omega_i(t)=\sqrt{-1}\dbar_t H_i(t) \quad\text{and}\quad \int_X H_i(t)\omega_i(t)^n=0.
\end{equation}
Then we define $\mathcal{G}: [0,\e_0)\to \mathbb{R}$ as follows;
$$\mathcal{G}(t)=\int_X\sum_{i=1}^N H_i(t)(1-e^{f_i(t)})\frac{\omega_i(t)^n}{\int_X\omega_i(t)^n},$$
where $(f_i(t))_{i=1}^N$ is the Ricci potential for $(\theta_i(t) +\del_t\dbar_t\phi_i(t))_{i=1}^N$.

Since Theorem \ref{Thm2} is proved by the same way as in the proof of Theorem \ref{part1} together with the following lemma, we omit the proof of it.
\begin{lemma}
If $(J(t), (\theta_i(t))_{i=1}^N) \in B_{G'}$ satisfies
\begin{equation}
\| J(t)-J \|_{C^1(X, \omega_0)} \leq C\e_0,
\end{equation}
then there exists $C'>0$ such that for any $t\in [0,\e_0)$ and each $i$, 
$$\| 1-e^{f_i(t)} -H_i(t) \|_{L^2(\omega_0)}\leq C' \e_0 \| c(t) \|_{\mathrm{Euc}}.$$
\end{lemma}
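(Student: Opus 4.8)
The goal is to reprove, in the complex‑deformation setting, the estimate
$$\| 1-e^{f_i(t)} -H_i(t) \|_{L^2(\omega_0)}\leq C' \e_0 \| c(t) \|_{\mathrm{Euc}}$$
that played the central role in the proof of Theorem \ref{part1}. The structure will mirror the argument for inequality \eqref{compare} exactly, the only new ingredient being that the complex structure, and hence the operators $\dbar_t$, $\partial_t$, and the contraction $i_{V(t)}$, now vary with $t$. First I would take the defining equations \eqref{H(t)} for $H_i(t)$ and the identity $i_{V(t)}\omega_i(t)=\sqrt{-1}\dbar_t H_i(t)$, and compare them with the analogous relation for the Ricci potential. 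Since $(\omega_i(t))_{i=1}^N$ is a coupled K\"ahler--Einstein metric precisely when $1-e^{f_i(t)}=0$, and since $V(t)=\mathrm{grad}$ of $\sum_p c_p(t)\bm v_p$ in the fixed metrics $\theta_i$, the key computation is that
$$\sqrt{-1}\dbar_t\bigl(1-e^{f_i(t)}-H_i(t)\bigr)=\sum_{p=1}^d c_p(t)\, i_{V_p(t)}\bigl(\theta_i(t)-\theta_i+\partial_t\dbar_t\phi_i(t)\bigr)+(\text{error from }\dbar_t\text{ vs. }\dbar),$$
where the right‑hand side is small: $\|\theta_i(t)-\theta_i\|$ is $O(\e_0)$ because $t<\e_0$ and the family is smooth, $\|\phi_i(t)\|_{W^{l+2,2}_{G'}}\le C\e_0$ by Lemma \ref{IFT'}, and $\|J(t)-J\|_{C^1(X,\omega_0)}\le C\e_0$ by hypothesis. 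Each such term carries a factor $c_p(t)$, so summing over $p$ and using Cauchy--Schwarz produces the factor $\|c(t)\|_{\mathrm{Euc}}$.

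**Key steps in order.** (1) Write $1-e^{f_i(t)}-H_i(t)$ as the function whose $\dbar_t$‑derivative is the displayed expression, using that $1-e^{f_i(t)}$ is the holomorphic potential (in $\omega_i(t)$, with respect to $J(t)$) of $V(t)$ evaluated against $\omega_i(t)$, while $H_i(t)$ is its potential against $\omega_i(t)$ as well, with the discrepancy measured by the difference of the two defining one‑forms — exactly as in the proof of \eqref{compare}. (2) Take $\mathrm{tr}_{\omega_0}\partial$ (with the fixed metric and fixed complex structure) of both sides; this converts the $\dbar$‑equation into an equation $\Delta_{\omega_0}(1-e^{f_i(t)}-H_i(t)) = (\text{terms bounded by }C\e_0\|c(t)\|_{\mathrm{Euc}})$, where one absorbs into the error all commutators between $\partial_t,\dbar_t$ and $\partial,\dbar$, each of which is controlled by $\|J(t)-J\|_{C^1}$. (3) Apply the eigenvalue (Green's operator) decomposition for $\Delta_{\omega_0}$ together with the normalization $\int_X H_i(t)\omega_i(t)^n=0$ and $\int_X(1-e^{f_i(t)})\omega_i(t)^n=0$, noting that these normalizations differ from the $\omega_0$‑normalization only by $O(\e_0)$ terms, to pass from the $L^2$ bound on the Laplacian to the $L^2$ bound on the function itself. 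This yields the claimed estimate.

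**Main obstacle.** The delicate point is the bookkeeping of the $t$‑dependence of $J(t)$: every integration by parts, every use of $i_{V(t)}\,\dbar_t$, and every comparison of $\Delta_{\theta_i(t)}$ with $\Delta_{\omega_0}$ introduces terms involving $J(t)-J$ or its first derivative, and one must check that all of these are genuinely $O(\e_0)$ uniformly — this is where the hypothesis $\|J(t)-J\|_{C^1(X,\omega_0)}\le C\e_0$ is used, and it is the reason this lemma is stated separately rather than folded silently into the proof of Theorem \ref{Thm2}. A secondary, more routine, point is that $V_p(t)=J(t)\xi_p+\sqrt{-1}\xi_p$ depends on $t$, so the holomorphic vector fields $V_p(t)$ are only $C^1$‑close to the fixed $V_p=V_p(0)$; but since each appears multiplied by $c_p(t)$ and $\|c(t)\|_{\mathrm{Euc}}\le C\e_0$, this costs at most another factor of $\e_0$ and is harmless. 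Once these uniform bounds are in hand, the remaining manipulations are identical to those already carried out for \eqref{compare}, so I would simply indicate the parallel and not repeat them.
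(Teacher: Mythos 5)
Your proposal follows essentially the same route as the paper: compare the $\dbar_0$-potential identity $i_{\hat V(t)}\theta_i=\sqrt{-1}\dbar_0(1-e^{f_i(t)})$ (with $\hat V(t)=\sum_p c_p(t)V_p(0)$ holomorphic for the fixed $J$) against $i_{V(t)}\omega_i(t)=\sqrt{-1}\dbar_t H_i(t)$, bound $\Delta_{\omega_0}(1-e^{f_i(t)}-H_i(t))$ by $C\e_0\|c(t)\|_{\mathrm{Euc}}$ using $\|\omega_i(t)-\theta_i\|\le C\e_0$, $\|V_p(t)-V_p(0)\|_{C^1}\le C\e_0$ and the $C^1$-closeness of $J(t)$ to $J$, and conclude via the eigenvalue decomposition and the normalizations. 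The only ingredient you leave implicit is the a priori bound $\|H_i(t)\|_{C^2}\le C\|c(t)\|_{\mathrm{Euc}}$ (obtained in the paper from the elliptic equation $\Delta_{\omega_i(t)}H_i(t)=\sum_p c_p(t)\,\mathrm{tr}_{\omega_i(t)}\partial_t(i_{V_p(t)}\omega_i(t))$), which is what makes your ``error from $\dbar_t$ vs.\ $\dbar_0$'' term carry the needed factor $\|c(t)\|_{\mathrm{Euc}}$ and not merely $\e_0$.
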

\begin{proof}
First we define $\hat{V}(t)=\sum_{p=1}^d c_p(t)V_p(0)$ as a holomorphic vector field on $(X, J)$.
In view of the equation \eqref{GKE'}, for each $i$, it satisfies
\begin{equation*}
i_{\hat{V}(t)}\theta_i=\sqrt{-1}\dbar_0 (1-e^{f_i(t)}).
\end{equation*}
Together with \eqref{H(t)}, we have
\begin{equation}\label{estimateA}
\sqrt{-1}\dbar_0 (1-e^{f_i(t)}-H_i(t))=i_{\hat{V}(t)}\theta_i-i_{V(t)}\omega_i(t)+\sqrt{-1}(\dbar_t-\dbar_0)H_i(t).
\end{equation}

The estimates $\| \omega_i(t)-\theta_i \|_{W^{l+2,2}}\leq C\e_0$ given in Lemma \ref{IFT'} and
\begin{eqnarray*}
\|(V_p(t)-V_p(0)) \|_{C^0} + \| \partial_0(V_p(t)-V_p(0)) \|_{C^0}
&=&  \|(J(t)-J)\xi_p \|_{C^0} + \| \partial_0(J(t)-J)\xi_p \|_{C^0} \\
&\leq& C\e_0
\end{eqnarray*}
shows
\begin{eqnarray}\label{estimateB}
\Bigl\|\partial_0(i_{\hat{V}(t)}\theta_i-i_{V(t)}\omega_i(t)) \Bigr\|_{C^0}
&=& \Bigl\| \sum_{p=1}^d c_p(t) \partial_0 (i_{V_p(0)}\theta_i-i_{V_p(t)}\omega_i(t)) \Bigr\|_{C^0} \\
%&\leq& \Bigl(\sum_{p=1}^d\Bigl\|\partial_0 (i_{V_p(0)}\theta_i-i_{V_p(t)}\omega_i(t)) \Bigr\|_{C^0(X)}^2 \Bigr)^{1/2}
%           \Bigl\| c(t) \Bigr\|_{\mathrm{Euc}} \nonumber \\
&\leq& \Bigl\| c(t) \Bigr\|_{\mathrm{Euc}} \Bigl\{ \sum_{p=1}^d \Bigl( \Bigl\| i_{\partial_0(V_p(0)-V_p(t))}\theta_i \Bigr\|_{C^0} \nonumber \\
&&\quad\quad\quad\quad\quad\quad  + \Bigl\| i_{\partial_0V_p(t)}(\theta_i-\omega_i(t)) \Bigr\|_{C^0}  \nonumber \\
&&\quad\quad\quad\quad\quad\quad  + \Bigl\| i_{(V_p(0)-V_p(t))}\partial_0\theta_i \Bigr\|_{C^0} \nonumber \\
&&\quad\quad\quad\quad\quad\quad  + \Bigl\| i_{V_p(t)}\partial_0(\theta_i-\omega_0(t)) \Bigr\|_{C^0}\Bigr)^2 \Bigl\}^{1/2} \nonumber \\
&\leq& C\e_0 \| c(t) \|_{\mathrm{Euc}}. \nonumber
\end{eqnarray}

We next estimate the holomorphic potential $H_i(t)$.
Since $$\Delta_{\omega_i(t)}H_i(t)=\mathrm{tr}_{\omega_i(t)}\del_t\dbar_tH_i(t)
=\sum_{p=1}^dc_p(t)\mathrm{tr}_{\omega_i(t)}\partial_t(i_{V_p(t)}\omega_i(t))$$
and since $\| \omega_i(t)-\theta_i \|_{C^{2,\alpha}}\leq C\e_0$ 
(if the exponent $l$ is taken sufficiently large in Lemma \ref{IFT'}),
then there exists $C>0$ independent $t$ such that $\| H_i(t) \|_{C^2}\leq C \| c(t) \|_{\mathrm{Euc}}$.
Thus
\begin{equation}\label{estimateC}
\Bigl| \partial_0(\dbar_t-\partial_0)H_i(t) \Bigr| = \frac{1}{2} \Bigl| \partial_0 (J_t-J) dH_i(t) \Bigr| \leq C\e_0 \| c(t) \|_{\mathrm{Euc}}.
\end{equation}
Therefore, by \eqref{estimateA}, \eqref{estimateB} and \eqref{estimateC}, we have
$| \Delta_{\omega_0}(1-e^{f_i(t)}-H_i(t)) | \leq C\e_0 \| c(t) \|_{\mathrm{Euc}},$
and the eigenvalue decomposition for $\Delta_{\omega_0}$ and the normalization conditions for $f_i(t)$ and $H_i(t)$ shows
$\| 1-e^{f_i(t)} -H_i(t) \|_{L^2(\omega_0)}\leq C' \e_0 \| c(t) \|_{\mathrm{Euc}}.$
This completes the proof.
\end{proof}
%%%%%%%%%%%%%%%%%%%%%%%%%%%%%%%%%%%%%%

%%%%%%%%%%%%%%%%%
\bigskip

%\address{
%Department of Applied Mathematics \\ 
%Fukuoka University \\
%Fukuoka  814-0180\\
%Japan
%}
%{satoshi.nakamura.r8@dc.tohoku.ac.jp}
\end{document}